\newif\ifIsJCP
\newif\ifIsSpringer
\newlength{\alphabet}
\newif\ifIsArxiv
\renewcommand\date[1]{}
\journalname{BIT}
\DeclareMathAlphabet{\mathpzc}{OT1}{pzc}{m}{it} 
\numberwithin{equation}{section}
\newcommand\citet[1]{\cite{#1}}
\def\abbrevsize{.85}
\newcommand\abbrevformat[1]{\textscale{\abbrevsize}{#1}}
\newcommand\abbrev[1]{\abbrevformat{#1}}
\newcommand{\hypertargetraised}[1]{\Hy@raisedlink{\hypertarget{#1}{}}}
\crefname{equation}{Eq.}{Eqs.}
\Crefname{equation}{Equation}{Equations}
\crefname{apx}{Appendix}{Appendices}
\crefname{algocf}{Algorithm}{Algorithms} 
\newtheorem{theorem}{Theorem}[section]
\newtheorem{remark}[theorem]{Remark}
\newcommand\para[1]{\paragraph{\textbf{#1}.}}
\newcommand\lhb[1]{\textbf{#1}}        
\newcommand{\algcaption}[3]{
        \ifthenelse{\isempty{#3}}
                   {\caption[#1]{{\sc #2.} \label{#1}}}
                   {\caption[#1]{{\sc #2.} \newline\small{#3} \label{#1}}}
        }
\newcommand{\mcaption}[3]{
  \ifthenelse{\isempty{#2}}
             {\caption[#1]{#3 \label{#1}}}
             {\caption[#1]{{\sc #2.} #3 \label{#1}}}
}
\newlength\figureheight
\newlength\figurewidth
\newif\ifUseTikz
\pgfplotsset{compat=newest}
\pgfplotsset{plot coordinates/math parser=false}
\newcommand{\includepgf}[2][1]{
  \beginpgfgraphicnamed{#2}%
  \tikzsetnextfilename{tikz-external-#2}%
  \scalebox{#1}{\subimport{figs/}{#2.pgf}}%
  \endpgfgraphicnamed%
}
\tikzset{
    every picture/.style={
        execute at begin picture={
            \let\ref\@refstar
        }
    }
}
\newcommand{\includepgf}[2][1]{
\scalebox{#1}{\includegraphics[]{tikz-external-#2}}%
}
\let\d\undefined
\let\O\undefined
\DeclareMathOperator{\Grad}{\nabla}
\DeclareMathOperator{\Div}{\nabla\cdot}
\newcommand\defeq{\mathrel{\mathop :}=}
\newcommand\inner[2]{#1\cdot #2}
\newcommand\d{\ensuremath{\,\mathrm{d}}}
\newcommand\parderiv[2]{\frac{\partial #1}{\partial #2}}
\newcommand\ii{i}
\newcommand\O[1]{\ensuremath{\mathcal{O}\left(#1\right)}}
\newcommand\ordinal[1]{\ifthenelse{\isin{#1}{abcdefghijklmnopqrstuvwxyz}}{\ensuremath{#1^\mathrm{th}}}{\engordnumber{#1}}}
\newcommand\R{\mathbb{R}}
\let\vector\undefined
\DeclareMathAlphabet{\mathbfsf}{\encodingdefault}{\sfdefault}{bx}{n}
\newcommand\discrete[1]{\mathsf{#1}}
\newcommand\scalar[1]  {#1}
\newcommand\scalard[1] {\discrete{#1}}
\newcommand\vector[1]  {\bm{#1}}
\newcommand\vectord[1] {\bm{\mathsf{#1}}}
\newcommand\conv[1]    {\mathcal{#1}}
\renewcommand\Im{\operatorname{Im}}
\def\reynolds{\ensuremath{\mathit{R\kern-.13em e}}}  
\def\capillary{\ensuremath{\mathit{C\kern-.21em a}}} 
\newcommand\vx{{\vector{x}}}  
\newcommand\vX{{\vector{X}}}
\newcommand\vy{{\vector{y}}}
\newcommand\vz{{\vector{z}}}
\newcommand\vn{{\vector{n}}}
\newcommand\vr{{\vector{r}}}
\newcommand\vu{{\vector{u}}}
\def\dl{double-layer\xspace}   
\def\ns{near-singular\xspace}
\def\gl{Gauss--Legendre\xspace}
\def\nystrom{Nystr\"om\xspace}
\def\sl{single-layer\xspace}
\newcommand\op[1]{\mathcal #1}
\def\emdash/{\kern 0.2em---\kern 0.2em}
\def\qbkix{\abbrevformat{QBKIX}\xspace}
\def\qbkixlong{Quadrature by Kernel-Independent Expansion\xspace}
\def\qbx{\abbrevformat{QBX}\xspace}
\def\gmres{\abbrevformat{GMRES}\xspace}
\def\pde{\abbrevformat{PDE}\xspace}
\def\qbxlong{Quadrature by Expansion\xspace}
\def\ki{kernel-independent\xspace}
\def\kifmm{\abbrevformat{KIFMM}\xspace}
\def\rc{\ensuremath{{r_c}}}
\def\nc{\ensuremath{{n_c}}}
\def\rp{\ensuremath{R}}
\def\np{\ensuremath{{n_p}}}
\def\re{\ensuremath{{r}}}
\def\remax{\ensuremath{\delta}}
\def\cen{\ensuremath{\vector{c}}}
\def\acc{\ensuremath{\varepsilon}}
\def\rtol{\ensuremath{\acc_r}}
\def\atol{\ensuremath{\acc_a}}
\def\apinv{\acc_\mathrm{pinv}}
\def\ech{\ensuremath{e_c}}
\def\err{\ensuremath{e}}
\newcommand\e[1]{\ensuremath{e{#1}}}
\newcommand\sci[2][1]{\ensuremath{\ifthenelse{\equal{#1}{1}}{}{#1\times}10^{#2}}}
\newcommand\twod{\abbrev{2D}\xspace}
\newcommand\threed{\abbrev{3D}\xspace}
\renewcommand\u{\scalar{u}}   
\newcommand\uq {\hat   {u}}   
\newcommand\un {\tilde {\u}}  
\newcommand\unv{\tilde {\vu}} 
\newcommand\fundsol{\Phi}
\newcommand{\bi}{\begin{itemize}}
\newcommand{\ei}{\end{itemize}}
\newcommand{\ben}{\begin{enumerate}}
\newcommand{\een}{\end{enumerate}}
\newcommand{\be}{\begin{equation}}
\newcommand{\ee}{\end{equation}}
\newcommand{\bea}{\begin{eqnarray}}
\newcommand{\eea}{\end{eqnarray}}
\newcommand{\ba}{\begin{align}}
\newcommand{\ea}{\end{align}}
\newcommand{\bse}{\begin{subequations}}
\newcommand{\ese}{\end{subequations}}
\newcommand{\bfi}{\begin{figure}}
\newcommand{\efi}{\end{figure}}
\newcommand{\half}{\mbox{\small $\frac{1}{2}$}}
\newcommand{\xx}{\vx}       
\newcommand{\yy}{\vy}
\newcommand{\zz}{\vz}
\newcommand{\cc}{{\vector{c}}}
\newcommand{\RR}{\mathbb{R}}
\newcommand{\bal}{\bm{\alpha}}
\newcommand\disc[3]{#1_{#2}^{#3}}
\newcommand\proxyc{\disc{\partial B}{\rp}{\cen}}
\newcommand\checkc{\disc{\partial B}{\rc}{\cen}}
\newcommand\evald {\disc{B}{\remax}{\cen}}
\begin{document}

\title{Ubiquitous evaluation of layer potentials using
  \qbkixlong}

\ifIsSpringer
\titlerunning{Ubiquitous evaluation of layer potentials using \qbkix}

\author{Abtin Rahimian \and
  Alex Barnett         \and
  Denis Zorin
}

\institute{ Abtin Rahimian \and Denis Zorin \at
  Courant Institute of Mathematical Sciences, New York
  University, New York, NY 10003
  \email{arahimian@acm.org}, \email{dzorin@cims.nyu.edu} \\
  Alex Barnett \at
  Department of Mathematics, Dartmouth College, Hanover, NH 03755
  \email{ahb@math.dartmouth.edu}
}

\date{Received: (date) / Accepted: (date)}
\fi
\ifIsJCP
\title{\qbkixlong\\ for Evaluation of Layer Potentials}
\author[nyu]{Abtin Rahimian} \ead{arahimian@acm.org}
\author[dart]{Alex Barnett} \ead{ahb@math.dartmouth.edu}
\author[nyu]{Denis Zorin} \ead{dzorin@cims.nyu.edu}
\address[nyu]{Courant Institute of Mathematical Sciences, New York
  University, New York, NY 10003}
\address[dart]{Department of Mathematics, Dartmouth College, Hanover,
  NH 03755}
\fi

\maketitle

\begin{abstract}
  We introduce a quadrature scheme\emdash/\qbkix{}\emdash/for the
  high-order accurate evaluation of layer potentials associated with
  general elliptic {\pde}s near to and on the domain boundary.
  Relying solely on point evaluations of the underlying kernel, our
  scheme is essentially \pde-independent; in particular, no analytic
  expansion nor addition theorem is required.
  Moreover, it applies to boundary integrals with singular, weakly
  singular, and hypersingular kernels.

  Our work builds upon Quadrature by Expansion (\qbx),
  which approximates the potential by an analytic expansion
  in the neighborhood of each expansion center.
  In contrast, we use a sum of fundamental solutions lying on a ring
  enclosing the neighborhood, and solve a small dense linear system
  for their coefficients to match the potential on a smaller
  concentric ring.

  We test the new method with Laplace, Helmholtz, Yukawa, Stokes, and
  Navier (elastostatic) kernels in two dimensions (\twod) using adaptive,
  panel-based boundary quadratures on smooth and corner domains.
  Advantages of the algorithm include its relative simplicity of
  implementation, immediate extension to new kernels,
  dimension-independence (allowing simple generalization to \threed),
  and compatibility with fast algorithms such as the
  kernel-independent \abbrevformat{FMM}.

  %
\end{abstract}

\section{Introduction\label{sec:intro}}
The boundary integral method is a powerful tool for solving linear
partial differential equations ({\pde}s) of classical physics with
piecewise constant material coefficients, with applications including
electromagnetic scattering, molecular electrostatics, viscous fluid
flow, and acoustics.  It involves exploiting Green's theorems to
express the solution in terms of an unknown ``density'' function
defined on the domain boundaries or material interfaces, using the
physical boundary condition to formulate an integral equation for this
density, and finally obtaining a linear algebraic system via Galerkin,
\nystrom, or other discretization.  Compared to commonly used
differential formulations, boundary integral methods have a number of
advantages: decreasing the dimension of the problem that needs to be
discretized, avoiding meshing the volume, and improving conditioning.  For
instance, the integral equation can often be chosen to be a Fredholm
equation of the second kind, resulting in a well-conditioned linear
system which can be solved by a Krylov subspace
methods in a few iterations. All these considerations are particularly important for problems with
complicated and moving geometries
\cite{helsing_close,rahimian2010b,corona2015,ojalastokes}.

The main difficulty in using boundary integral methods is the
need to evaluate singular and nearly-singular integrals:
\begin{inparaenum}[(i)]
\item Evaluating system matrix entries requires
  evaluation of the potential on the surface, which involves a
  \emph{singular} integral;
\item Once the density is solved for, the desired solution must
  still be evaluated in the form of a potential.
  As an evaluation point approaches the boundary of the domain, the
  peak in the resulting integrand becomes taller and narrower, giving
  rise to what is referred to as a \emph{\ns} integral.
  The result is an arbitrarily high loss of accuracy, if the distance
  from points to the surface is not bounded from below, when a quadrature scheme
  designed for smooth integrands is used \cite[Section~7.2.1]{atkinson} and \cite{ce}.
\end{inparaenum}

\begin{figure}[!tb]
  \subfloat[]{\includegraphics[width=3in,height=1.4in]{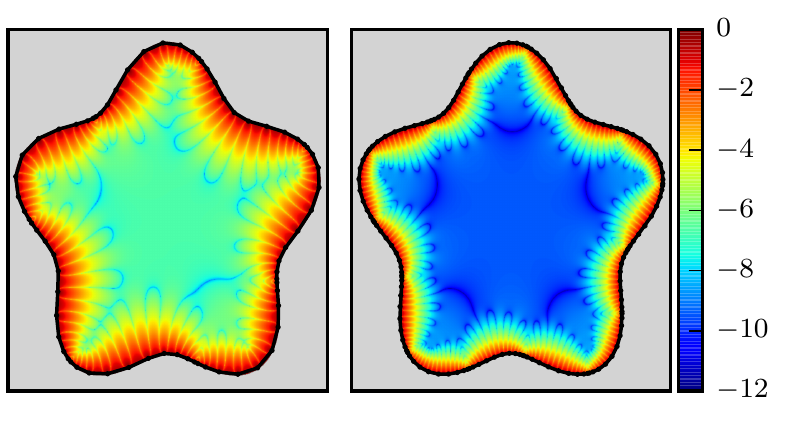}}
  \subfloat[]{\includegraphics[width=3in,height=1.4in]{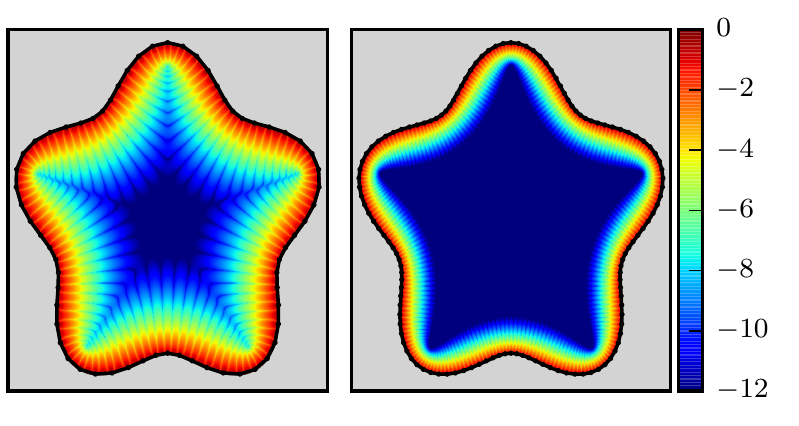}}
  \mcaption{fig:near-sing}{}{Evaluation error plotted in the solution
    domain due to approximating the Laplace \dl potential
    \cref{lapDLP} using a quadrature designed for smooth
    functions.
    Logarithm of absolute error, $\log_{10} |\un(\vx)-\u(\vx)|$, where
    $\u$ is the true solution and $\un$ is the discrete approximation
    using smooth quadrature is plotted for the case of constant
    density $\phi\equiv 1$.
    (a)~shows composite quadrature with $M=7$ (left) or $M=15$ (right)
    panels each with $q=10$ \gl nodes.
    (b)~shows the global composite trapezoid rule with $N=64$ (left) or
    $N=128$ (right) nodes.
  }
\end{figure}
\Cref{fig:near-sing} illustrates the near-singular evaluation of the solution
$u$ of the Dirichlet Laplace equation in a simple smooth domain, which
is represented by the double-layer potential
\begin{equation}
  u(\vx) = \frac{1}{2\pi} \int_\Gamma \frac{\partial}{\partial
    \vn_\vy} \log \frac{1}{\|\vx-\vy\|}\cdot\phi(\vy) \d s(\vy) ~,
\label{lapDLP}
\end{equation}
where $\phi$ is the density defined on the boundary $\Gamma$.
The growth in error as $\vx$ approaches $\Gamma$ is apparent in all
four plots (showing panel-based and global quadratures with different
numbers of nodes $N$).
Although the width of the  high-error layer near the boundary shrinks like $1/N$
\cite{ce}, the error always reaches \O{1} at the boundary.
The goal of this paper is to present a flexible scheme that
handles both tasks (singular and near-singular evaluation) to high-order accuracy in a
kernel-independent (i.e., {\pde}-independent) manner.

\para{Related work}
Designing quadrature schemes for singular and \ns integrals has a long
and rich history \cite{atkinson,LIE}.
Until recently, the quadrature methods were designed specifically for
either on-surface evaluation or near-surface evaluation.
Many of the on-surface integration quadrature are specific to a
certain type of kernel~(singularity), e.g., $\log |\vr|$ in \twod or
$1/|\vr|$ in \threed
\cite{kapur,alpert,helsing,kolm2001,kress95,sidi1988,strain1995,yarvin1998,bremer3d};
the former case is reviewed in \cite{hao}.

A popular method for on-surface quadrature is the product integration
(in \twod, for the global trapezoid rule see \cite[Section~4.2]{atkinson}
or \cite[Section~12.3]{LIE}, and for panel-based rules see
\cite{helsing_close}).
In this context, an analytic convolution of the kernel with each function in some
basis set is found, reducing evaluation of the integral to projection
of the boundary density onto that basis set.

Another approach for on-surface evaluation is singularity
subtraction, where the integrand is modified by subtracting an
expression that eliminates its singularity
\cite[Chapter~2]{davisrabin} and \cite{pozrikidis,jarvenpaa2003}.
However, this leaves high-order singularities in the kernel which
makes the higher derivatives of the kernels unbounded, limiting the
accuracy of the quadrature scheme.
Alternatively, for weakly singular kernels, one can use
transformations to cancel the singularity by the decay of area element
(e.g., in \threed using Duffy transformation
\cite{duffy1982} or polar coordinates)
\cite{brunoFMM,graglia2008,khayat2005,schwab1992,ying06,farina2001,johnson1989,Veerapaneni2011,Ganesh2004,Graham2002}.
To achieve a high convergence order, these methods need some form of partition of
unity so that a high-order polar patch can be constructed around each
point \cite{ying06}.

One can also regularize the kernel and then exploit quadrature schemes for
smooth functions \cite{lowengrub1993,Tornberg2004}.
However, to achieve higher accuracy, the effect of regularization needs
to be corrected by using analytic expressions (e.g.,~asymptotic
analysis) for the integrand \cite{beale}.
Finally, there exist special high-order quadrature schemes for domains
with corners, either via reparametrization \cite{Kress91,LIE},
panel-wise geometric refinement \cite{helsingtut}, or by custom
generalized Gaussian quadratures 
\cite{bremer10,bremer2d}.

We now turn to \ns integrals (evaluation close to the surface),
which has traditionally been handled as a distinct task
\cite{helsing_close,lsc2d,beale,hackbusch1994,khayat2005,tlupova2013nearly,helsingtut}.
Beale and coauthors~\cite{yingbeale,beale2015,tlupova2013nearly} use
regularization methods to remove the singularity of the integral.
To correct the error introduced by the regularization, they perform
asymptotic analysis and find correction expressions.
Some authors used singularity cancellation (e.g., using local
polar coordinates) in evaluating \ns integrals
\cite{hackbusch1994,khayat2005}.
Interpolation along carefully-chosen lines connecting distant points
(where a smooth quadrature is accurate) to an on-surface point has
also been successful \cite{ying06,quaife}.

Recently, unified approaches to on-surface and close evaluation have
been proposed, the first  being the \twod Laplace high-order global
and panel-based quadratures of Helsing~and~Ojala~\cite{helsing_close}.
This approach has been extended to \ns Stokes single- and \dl kernels with
global \cite{lsc2d} and panel-based \cite{ojalastokes} quadrature.
 The use of {\em local expansions}\emdash/analytic separation of
variables to the \pde solutions analogous to a Taylor series in the
complex plane\emdash/for the evaluation of integrals near the boundary
was introduced in \cite{ce}.

In this scheme, a refined smooth quadrature is needed to accurately
evaluate the expansion coefficients via the addition theorem.
It was observed that the expansion can also be used to evaluate at
target points on the boundary of the domain, if certain conditions are
satisfied \cite{QBX2}; this was used to construct a unified
quadrature scheme\emdash/\qbxlong (\qbx)\emdash/for near and
on-surface evaluation of integrals \cite{QBX}.
Racch~\cite{Rachh2016} recently showed how to efficiently combine \qbx
evaluations with the fast multipole method.

However, powerful as they are, \qbx schemes require both a local
expansion and addition theorem particular to each \pde, which would be
algebraically tedious especially for vector-valued {\pde}s such as
Stokes and elastostatics.
This motivates the need for a scheme that can handle multiple {\pde}s
without code changes. The present work fills this gap.

\para{Overview and model problems}
As with \qbx, we construct an approximate representation for \pde
solutions in a small region abutting the boundary, then use it for
near and on-surface evaluations.
However, in contrast to \qbx, our representation is an {\em equivalent
  density} on a closed curve enclosing this region; when discretized,
this gives a ring of ``proxy'' point sources (also known as the {\em
  method of fundamental solutions} \cite{Bo85}).
Matching is done at a second smaller ring of ``check'' points where a
refined smooth quadrature is accurate, thus the only dependence on the
\pde is via point-to-point kernel evaluations\emdash/the method is
\ki, and essentially \pde-independent.

We focus on Dirichlet boundary-value problems
\begin{align}
  \op{L} \u & = 0 \quad\text{in } \Omega~,\label{eqn:generic-pde}\\
  \u &= \scalar{f} \quad\text{on } \Gamma~,\label{eqn:generic-bc}
\end{align}
where $\Omega$ is a simply-connected interior domain with smooth boundary
$\Gamma$, for the following partial differential operators:
\begin{equation}
  \op{L} \u =
  \begin{cases}
    \Delta \u                               & \text{Laplace},                                      \\
    (\Delta-\lambda^2)\u                    & \text{Yukawa},                                       \\
    ( \Delta+\omega^2)\u                    & \text{Helmholtz} \quad (\Im\omega\ge 0),             \\
    \Delta \u-\Grad \scalar{p}              & \text{Stokes} \quad (\text{subject to } \Div \u =0), \\
    \Delta \u+\frac{1}{1-2\nu}\Grad \Div \u & \text{Elastostatic}.
  \end{cases} \label{eqn:pde-type}
\end{equation}
To obtain well-conditioned formulations of the problem, we represent the
solution of \cref{eqn:generic-pde,eqn:generic-bc,eqn:pde-type} for
$\vx\in\Omega$  by the \dl potentials
\begin{equation}
  \label{eqn:generic-bi-sol}
  \u(\vx) = \conv{D}[\scalar{\phi}](\vx) \defeq \int_{\Gamma}
  \parderiv{\Phi(\vx,\vy)}{\vn_\vy} \scalar{\phi}(\vy)\, \d s(\vy)~,
\end{equation}
where $\Phi$ is the fundamental solution for the operator $\op{L}$,
and $\phi$ is an unknown density.
The fundamental solutions for the operators listed in
\cref{eqn:pde-type} are given in \cref{apx:kernels}.
A standard step (
see, e.g., \cite{HW}) is now to substitute \cref{eqn:generic-bi-sol}
into the boundary condition and use the jump relation for the potential to obtain
the second-kind integral equation
\begin{alignln}
  \label{eqn:generic-den}
  -\frac{1}{2} \scalar{\phi}(\vx) + (D\scalar{\phi})(\vx) =
  \scalar{f}(\vx), \quad\text{for } \vx \in \Gamma~,
\end{alignln}
where $D$ is the restriction of $\conv{D}$ to the curve. Here, the
integral implicit in the integral operator $D$ must be taken in the
principal value sense.

\para{Discretization and overall approach}
In general, a smooth quadrature is a set of nodes
$\xx_i \in \Gamma$ with associated weights $w_i$, such that
\begin{alignln}
  \int_{\Gamma} \scalar{f} \d s \approx \sum_{i=1}^N w_i
  \scalar{f}(\xx_i)~,
  \label{eqn:smooth-quad}
\end{alignln}
holds to high accuracy for smooth functions on
$\Gamma$\emdash/including the density $\phi$.
In this work, we use $q$-node \gl quadrature scheme on panels, and
for convergence tests, we increase the number of panels while holding $q$ fixed.
Upon discretization, \cref{eqn:generic-den} will be approximated by
the linear system
\begin{equation}
\sum_{j=1}^N A_{ij} \phi_j \; = \; f(\vx_i), \qquad i=1,\ldots,N~,
\label{A}
\end{equation}
whose solution $\vector{\phi} = \{\phi_j\}_{j=1}^N$
approximates the density values at the collocation points.
In practice, for large problems, the matrix $A$ is not constructed
explicitly, but instead the matrix-vector product $A\vector{\phi}$ is
evaluated using the fast multipole method.
We  test the \qbkix scheme both for applying matrix $A$ (i.e.,
on-surface evaluation) and  evaluating the solution at arbitrary
points, near-evaluation in particular.

The system matrix elements are computed using the
\nystrom method \cite[Ch.~12]{LIE}.
If the operator $D$ is  smooth on
$\Gamma \times \Gamma$, we use a smooth \nystrom formula; e.g., for
Laplace,
\begin{equation}
A_{ij} = \left\{\begin{array}{ll}   \parderiv{\Phi(\xx_i,\xx_j)}{\vn_{\xx_j}} w_j, & i\neq j,\\
-\frac{1}{2} -\frac{\kappa(\xx_j)}{4\pi}w_j,& i=j,\end{array}\right.
\label{Anyst}
\end{equation}
where $\kappa(\xx)$ is the curvature at $\xx\in\Gamma$. This
discretization achieves super-algebraic convergence.
However, for Yukawa and Helmholtz in \twod, and all \threed elliptic
kernels, singular quadrature is needed.


In contrast to established approaches using specialized singular
quadratures, we follow the idea underlying the \qbx method:
\emph{applying $A$ to a vector $\bm\phi$ is equivalent to evaluating
  the interior limit of the double-layer potential due to a smooth
  density interpolated from $\bm\phi$}.
This observation leads to the \qbkix idea:
use a fast algorithm combined with the smooth quadrature scheme, \cref{eqn:smooth-quad},
for point evaluation \emph{away} from the
surface\emdash/at points we refer to as \emph{check points}\emdash/
and interpolate from these points to the on surface point, to compute
$A\vector{\phi}$ for the Krylov iteration.
As this interpolation can be done using points on  one or both sides of the surface, in
\cref{ssc:spectrum} we  compare ``one-sided'' and ``two-sided''
variants of \qbkix with respect to their spectra and iterative
convergence rates.

Although we are focusing on interior Dirichlet
tests and \nystrom-style sampled representation of the density
in this work, \qbkix is applicable for Neumann or other boundary
conditions,
and Galerkin and other discretization types.
Moreover, while the approach presented in this paper is restricted to \twod,
there is no fundamental obstacle to an extension to \threed.

The rest of the paper is structured as follows.
In \cref{sec:formulation} we present the \qbkix algorithm for
integration. We present an error analysis in \cref{sec:error}.
In  \cref{sec:results}, and report the results of numerical
experiments quantifying the accuracy of the method for a number of
representative problems.

\section{Algorithms\label{sec:formulation}}
Given a closed curve $\Gamma\subset \RR^2$ with interior $\Omega$, and
Dirichlet data $f$ on $\Gamma$, our goal is to numerically solve the
integral equation~\eqref{eqn:generic-den} for density
and evaluate the solution of the underlying \pde
at an arbitrary target point $\xx\in\overline{\Omega}$.
We assume that $\Gamma$ is parametrized by a $2\pi$-periodic
piecewise-smooth function $\vX(t)$, so that the arc length element is
$\d s = |\vX'(t)|\d t$, $|\vX'(t)|$ is bounded from below,  and
that $\vX(t)$ and the data function $f(t)$ may be evaluated at any
$t\in [0,2\pi)$. The boundary is subdivided into \emph{panels},
 which can be of different lengths, on which
the native quadrature rule is defined (we use \gl quadrature), at $q$
nodes $\xx_j$ per panel.
We assume that the density is available as a vector of samples
$\phi(\xx_j)$ at the quadrature nodes.

\subsection{Single-point evaluation}
We describe our method in the simplest form for computing the solution
accurately at a given point $\vx$. We assume that there is a single
point on $\Gamma$ closest to $\vx$, on a panel of length $L$.
We assume that  at a distance $2\delta$
along the normal to the panel at any point,  the native quadrature
meets the target accuracy of evaluation,  so the
distance from $\vx$ to the surface is less than $2\delta$.
We discuss how $\delta$ is chosen and how to ensure that this
condition holds after the algorithm formulation.

The local geometric configuration of various types of points we are
using in our algorithm is shown in Figure~\ref{fig:stages}.  The setup
shown in the image is for computing the potential accurately for any
point $\vx$ inside a disk $\evald$ of radius $\delta$ centered at $\cen$, touching the
surface at a point $\vx_0$ on a panel of length $L$.

The points we use in the algorithm are placed on two concentric
circles with the same center as the evaluation disk $\evald$.
The \emph{proxy  points} on a circle $\proxyc$ of a radius $R>\delta$,
where we compute \emph{equivalent density} values, are used to approximate
the solution inside $\evald$.
The \emph{check points} $\vz_i$ are  on a circle $\checkc$ of a radius $r_c
< \delta$.  At these points, we evaluate the solution accurately by
using a smooth quadrature on panels refined by a factor $\beta$.
The check points are used to compute the equivalent density values at
the proxy points as described below.
\begin{figure}[!bt]
  \centering
  \setlength\figureheight{2.5in}
  \setlength\figurewidth{1.5\figureheight}
  \includepgf{schematic}
  \mcaption{fig:schematic}{Schematic of a \ki expansion}{Geometry of
    \qbkix, with proxy and check circles  centered at $\cen$ near a panel of
    length $L$ of the boundary $\Gamma$ discretized with $q$ \gl
    sample points. The evaluation domain $\evald$ is a disc centered at
    $\cen$ of radius $\remax$ (dashed circle abutting the boundary at $\vx_0$).
    The points $\vz_i$ are the check points on the
    circle $\checkc$ of radius $r_c$, and  $\vy_j$ are the proxy points on
    the circle $\proxyc$ of radius $\rp$.
    For error analysis, the singularities of the exact solution are
    assumed to be at a distance farther than $\rho$ from $\cen$.
    Note that, for clarity, the relative sizes of circles and distances
    between samples are different from the ones actually used.}
\end{figure}

The algorithm depends on a number of parameters; these parameters need
to be chosen appropriately to achieve an overall target accuracy. Specific choices
are discussed in the next section. The key steps in the algorithm are
\begin{enumerate}[(1)]
\item \lhb{Set-up of proxy and check points.}
  We  choose a center $\cc \in \Omega$ at a
  distance $\delta$ from $\Gamma$, such that $\vx$ is no further from
  $\cc$ than $\delta$.
    E.g., for $\vx\in\Gamma$, we set $\cc = \vx - \delta \vn$, where $\vn$ is
    the outward normal.
  $\np$  proxy points $\vy_j$ are arranged equally on the circle
  of radius $R$ with center $\cc$, where $R>\delta$ is of order $L$.
  Similarly $\nc$ check points $\zz_i$ are arranged on the
  concentric circle of radius $r_c<\delta$ (\cref{fig:schematic}).
  %
\item \lhb{Upsampling the density.}
  Each panel is split into $\beta$  panels corresponding to equal
  ranges of $t$,   to give a set of $\beta N$  fine-scale nodes $\tilde{\xx}_l$
  with weights $\tilde{w}_l$. The global factor $\beta$ is chosen so that
  the solution can be evaluated accurately at the check points, i.e.,
  at a distance $\remax -r_c$ from the surface.
  The density is interpolated from its original samples $\phi(\xx_j)$
  on each panel, using \ordinal{q} order Lagrange interpolation to the
  fine-scale nodes,   to give the refined vector of samples
  $\tilde{\phi}_l$, $l=1,\ldots, \beta N$.
  %
\item \lhb{Direct upsampled evaluation at check points.}
  The integral is evaluated at each check point $\zz_i$ using the
  fine-scale boundary native quadrature:
  \begin{equation}
    \label{upsampled}
    \un(\zz_i) = \sum_{l=1}^{\beta N} \frac{\partial
      \Phi(\zz_i,\xx_l)} {\partial \vn_{\xx_l}} \tilde{\phi}_l
    \tilde{w}_l~.
  \end{equation}
  Denote by $\unv \defeq \{\un(\zz_i)\}_{i=1}^{\nc}$ the
  column vector of these values at the check points.
  %
\item \lhb{Solving for the equivalent density values.}
  Next, we construct an $\nc\times\np$ matrix $Q$ with elements
  \begin{equation}
    Q_{ij} = \Phi(\zz_i,\yy_j)~.
    \label{Q}
  \end{equation}
  Applying $Q$ to a vector of density values at proxy points computes
  a periodic trapezoidal rule  approximation to the single-layer
  potential corresponding to this density evaluated at check points.
  Then we solve a small, dense, and ill-conditioned linear system
  \begin{equation}
    Q \bal = \unv~,
    \label{linsys}
  \end{equation}
  in the least-squares sense,  to get the set of proxy density values
  $\bal := \{\alpha_j\}_{j=1}^{\np}$.
  The ill-conditioning arises from the exponential decay of singular
  values in the single-layer operator between concentric circles (see
  \cref{fig:sing-vals}).
  Despite this, if \cref{linsys} is solved in a backward-stable
  manner, a high-accuracy result is obtained (cf. \cite{mfs}, we
  explain the details below for completeness).
\item \lhb{Evaluation of the proxy sources at the target.}
  Finally, the equivalent density is evaluated at the target $\vx$,
  \begin{equation}
    \label{eqn:localexp}
    \uq(\vx) = \sum_{j=1}^{\np} \scalard{\alpha}_j \fundsol(\vx,\vy_j)~,
  \end{equation}
  We may view this as an approximation for the true solution $u$ in the basis
  of fundamental solutions centered at the proxy points, that
  holds to high accuracy in the disk $\evald$. 
\end{enumerate}

\Cref{fig:stages} illustrates the stages of \qbkix evaluation
for a set of target points lying in a single disk $\evald$.
The final evaluation of \cref{eqn:localexp} over the disc of target points
has around 12 digits of accuracy.

\begin{figure}[!bt]
  \centering
  \small
  \setlength\figureheight{1.4in}
  \setlength\figurewidth{1.7in}
  \includepgf{error-stages}        
  \mcaption{fig:stages}{Stages of \qbkix construction}{ The stages
    given in \cref{sec:formulation} are illustrated using plots
    of the $\log_{10}$ of the evaluation error near the boundary, for
    the double-layer density $\phi\equiv 1$ for Laplace's
    equation. The evaluation disc $\evald$ (dashed circle), check circle $\checkc$
    (solid circle) are shown, and proxy points are not shown.
  }
\end{figure}

\para{Handling the ill-conditioned linear solves}
The ill-conditioned system \cref{linsys} is solved by applying a regularized
pseudo-inverse, as follows.
Let $\apinv$ be the desired relative accuracy for inversion; typically we set
$\apinv=\sci{-14}$.
Then, taking the singular value decomposition (\abbrevformat{SVD})
\cite{nla} $Q = U \Sigma V^*$ with $\Sigma = \mbox{diag}\{\sigma_j\}$
being the diagonal matrix of singular values, we write $\Sigma^\dagger
\defeq \mbox{diag}\{\sigma^\dagger_j\}$ where
\begin{equation}
  \sigma^\dagger_j = \left\{ \begin{array}{ll} \sigma_j^{-1}, & \quad
    \sigma_j>\apinv \sigma_1,\\ 0, & \quad
    \mbox{otherwise}. \end{array} \right.\label{eq:pinv-sing-val}
\end{equation}
Then we use the solution
\begin{equation}
  \bal := V (\Sigma^\dagger U^* \vu)~.
  \label{pinv}
\end{equation}
Note that the matrices $U^*$ and $V$ must be applied in two
separate steps (as indicated by the parenthesis) for backward
stability \cite{nla}, since a matrix-vector multiply with the single
pseudo-inverse matrix $Q^\dagger := V \Sigma^\dagger U^*$ is unstable
due to round-off error caused by its large entries.
If $k$ is the number of singular values greater than $\apinv$, i.e., the
numerical $\apinv$-rank of the matrix $Q$, the factors $V$ and $U^*$
have sizes  $\np \times k$ and  $k \times \nc$ respectively.

\para{Parameter summary} The algorithm described above uses a number of
parameters, which we summarize here.

The following parameters are defined globally:
\begin{itemize}
\item The quadrature order $q$, which determines the number of samples
  per panel, and both far-field evaluation accuracy and, together
  with $\beta$, the accuracy of evaluation at check points. This
  parameter is selected arbitrarily based on the desired overall
  accuracy. We use $q = 16$, which is sufficient for full double
  precision of integration in the far field.

\item The panel refinement factor $\beta$ which needs to be chosen to
  maintain desired accuracy for check point evaluation.

\item The numbers of proxy points $\np$ and check points $\nc$;
  the former determines how accurate the approximation inside $\evald$ can
  be and the latter is chosen to have enough sampling.
\end{itemize}

Three additional parameters, the accurate evaluation distance $\delta$, the
proxy point circle radius $\rp$ and the check point circle radius
$r_c$, are panel-dependent, and are chosen with respect to panel size
$L$. A careful choice of all of these, as fractions of $L$, is needed
to achieve a target error without requiring excessive refinement.
We discuss the choice of these parameters in Section~\ref{sec:error}.

\para{Defining panels}
In our experiments, we consider two ways of defining panels. The first
approach is primarily needed to understand the convergence of the
method with respect to the number of panels, i.e., for a given number
of panels, we determine the error.  In this case, we simply
partition the parametric domain of $\vX(t)$ into $M$ equal-sized
intervals, with one panel corresponding to each interval.  We assume
the parametrization to be sufficiently close to an arclength
parametrization, so that the panel length has little variation,
and choose $M$ to be fine enough so that the geometric condition on
the check points is satisfied.

In a more practical scenario, when a target error is specified,
we need to determine panel sizes adaptively.
The key requirement that needs to be satisfied by panels is that
the accuracy of check-point evaluation at stage 2 matches the target
accuracy in the far field (i.e., points farther than $2\remax$ from the boundary).
%
%
%
The adaptive refinement starts with one  panel covering the entire
boundary, then recursively splitting panels into two equal pieces in
parameter $t$, until all panels are deemed \emph{admissible} or their length is less than a set tolerance $\acc$.

A panel is admissible if
\begin{inparaenum}[(i)]
\item the interpolation of $\vX(t)$ and $f(t)$ from a $q$-node panel
  at the collocation points of the two $q$-nodes \gl panels (obtained
  by splitting the coarse panel to two pieces) matches the direct
  evaluation of $\vX$ and $f$ on the finer nodes, to a maximum
  absolute tolerance $\atol$, which we choose as $\sci{-11}$ unless
  stated otherwise;
\item it is no more than twice the parameter length of that of its
  neighbors;
\item the length of the panel does not exceed a given fraction of the minimal radius of curvature at a point of the panel, \emph{or} is less than a minimal length proportional to the target error; and
\item any check point corresponding to a point $\xx$ is not closer than
  $\remax-\rc$ to any point on the surface.
\end{inparaenum}

The second criterion ensures that the panels are the leaves of a
\emph{balanced} binary tree, which is needed for accurate evaluation
of integrals at the check points. For domains with sharp corners, the
forth and second conditions imply dyadic refinement of panel length
bounded below by panel minimum length $\acc_l$.

In both cases, the result is a set of $N$ nodes $\xx_j = \vX(t_j)$,
where $t_j$ are the parameter values of the nodes, with weights $w_j =
|\vX'(t_j)| w'_j$ where $w'_j$ are the \gl weights scaled by the panel
parametric lengths.
This native quadrature approximates the
boundary condition $f$ with target accuracy $\atol$.
It follows from \cref{eqn:generic-den} that this also holds for the density $\phi$,
as $\phi$ to be no less smooth than $f$ and $\vX$.


\subsection{On-surface evaluation for iterative solution of the linear system}
\label{s:sided}

As discussed in the introduction, one context where singular
quadratures are needed is for applying $A$, the matrix discretization
of the operator $(-\half I +D)$, to the current density vector
$\bm\phi$ during the iterative solution of \cref{A}.
This matrix-vector multiplication is equivalent to evaluation of the
interior limit of the double-layer potential at the nodes due to the
smooth interpolant of the density vector.
As with \qbx \cite[Sec.~3.5]{QBX}, one may exploit this in two
different ways.
\begin{itemize}
\item One-sided \qbkix: as stated above, we use the interior limit of
  the potential at the nodes for $A\bm\phi$.
\item Two-sided \qbkix: we average the interior and exterior limits of
  the potential at the nodes, which, by canceling the jump relation
  terms, applies a matrix approximation to the operator $D$. We then
  explicitly add $-\half \bm\phi$ to the answer.
\end{itemize}
Although mathematically equivalent, these two variants smooth
high-frequency components in the density differently: one-sided \qbkix
tends to dampen these components, leading to an accumulation of
eigenvalues of $A$ around zero. This has a negative impact on
convergence.
In contrast, for two-sided \qbkix, since the approximation of $D$ tends to damp high-frequency components,
the explicit inclusion of $-\half I$ ensures that these components end up
being multiplied by a number very close to $-\half$, which leads to
better clustering of the spectrum and improved convergence rates.
We present a numerical comparison of these two alternatives in \cref{ssc:spectrum}.

\subsection{Efficiency considerations and computational
  complexity\label{s:complex}}

Given a set of evaluation points $\xx$, the brute-force approach is to
run the algorithm described above, including construction of check and
proxy points, for each sample point separately.
This is highly inefficient, and the following obvious optimizations
can be applied:
\begin{itemize}
\item  The upsampled density on the fine-scale nodes need be computed
  only once, and each expansion center may be chosen to cover several
  targets; this requires increasing evaluation disk radius $\delta$,
  adjusting other parameters accordingly.

\item The \abbrevformat{SVD} of matrices $Q$ may be precomputed.
  For  translation- and scale-invariant kernels,  (i.e., all kernels
  we consider except Yukawa and Helmholtz) these matrices do not depend on the choice
of the center and circle radii, as long as the ratio $\rp/\rc$ is fixed.
\item One may use the kernel-independent FMM  method 
for evaluation of the solution at the check points for all target points at once.
\end{itemize}

We consider the complexity of using  \qbkix for the task of
on-surface evaluation at all  boundary nodes $\vx\in\Gamma$.
For a boundary with $M$ panels and $q$-node \gl quadrature on each,
there are $N=Mq$ nodes in total.
We use a conservative assumption that a distinct set of check and proxy points is used for
each of the targets.
Then, using \abbrevformat{KIFMM}, the evaluation of the boundary
integral from the $\beta$-refined boundary to the check points is
\O{(\beta + \nc)N}.
We assume that the factorization of the pseudo-inverse for computing
the equivalent densities $\bal$ is precomputed. The cost of applying the factors
$V$ and $U^*$, of sizes $\np \times k$  and $k \times \nc$,
for targets point is \O{k(\nc+\np)N}.
The cost of evaluation of the approximation from proxy density values at target points
is \O{N\np}.

We conclude that the overall cost is \O{(\beta + \nc + k\nc + k\np + \np)N},
which for typical choices $\beta=4$ and $\nc=2\np$ reduces to \O{k\np N}.
We see that the scheme is linear in $N$, but with a prefactor of order
$k^2$ (since, as discussed in the next section, $\np$ is of order $k$).
The two-sided variant involves another overall factor of 2.

If the same check and proxy points are used for a number of targets,
an additional, potentially very large, constant-factor speedup can be obtained.
The speedup factor is proportional to the average number of
targets handled by each set of check and proxy points.


\section{Error analysis and parameter choices\label{sec:error}}
In this section, we present theoretical results, focusing on the cases
of scalar $\u$ governed by the Laplace equation $\Delta \u =
0$\emdash/or by the Helmholtz equation $(\Delta + \omega^2)\u = 0$ for
real $\omega$.  We expect similar results for other elliptic {\pde}s
in \cref{eqn:pde-type}.

We split \qbkix into two stages:
\begin{inparaenum}[(i)]
\item evaluation of $\u$ on the check points using a refined native
quadrature, with the associated error $\ech$;
\item solution of a small linear system to determine the
equivalent density values $\bal$ at the proxy points that best represent $\u$ at the check
points. This is followed by evaluating the approximation of $u$ at target
points using these density values.
\end{inparaenum}

At the first stage, the error $\ech$  is effectively the smooth quadrature error
of the refined panels.  The primary focus of our analysis is on the
second stage.  We analyze the error behavior in the idealized
situation of exact arithmetic and infinitely many check points,
obtaining the dependence of the second-stage error $\err$ on $\delta$,
$\rp$, $\rho$, and $n_p$.
We then describe a heuristic model for the effects of finite-precision
computations, which adds an extra term to $\err$, depending on $\ech$,
$\delta$,  $r_c$, and $k$.

We use the overall error model, along with experiments, to provide
a choice of the various parameters in the scheme resulting in the
on- and near-surface evaluation errors of the same magnitude as the
far-field integration errors.


\subsection{Error at check points\label{ssc:err-check}}
Recall that evaluation of $\u$ on the check points is done by
approximating the exact integral \cref{eqn:generic-bi-sol} by
\cref{upsampled} using $q$-node \gl quadrature on panels (subdivided
by factor $\beta$).
For a flat panel, the error $\ech$ in this evaluation is bounded by
standard quadrature estimates giving a term of the form
$C_q (L/(4 \beta d))^{2q} \|\phi\|_{C^{2q}}$ where  $d =
\remax-\rc$ is the closest distance of check points to the panel, and
$\phi$ denotes the density for which we evaluate the integrals.
Our adaptive refinement procedure ensures that the formula still holds,
as the radius of curvature of the panel is larger than its length, and
hence larger than $\remax$.

This estimate has the form of the second term in
\cite[Theorem~1]{QBX}, and for convergence as the panel
length $L$ going to zero, it requires $L/d$ to converge to zero as well.
Instead of following this route, we  fix the ratio $L/d$ to a
constant, by choosing $\remax$ and $\rc$ as fractions of $L$.
If $L/(4 \beta d)$ is sufficiently small, a high-order quadrature
for sufficiently large $q$ allows us to compute the integrals
with any desired precision. For instance, when $q=16$,
it is sufficient to use $L/(4\beta d) = 1/2$, to obtain an error on
the order of $\sci{-10}$ at distance $d$ from the panel.



\subsection{Error of the proxy point representation in exact arithmetic\label{ssc:err-exact}}
Next, we analyze the dependence of the error (computed in exact arithmetic) of
the second stage of \qbkix on the number of proxy points $n_p$,  the proxy circle radius $\rp$,
and the distance $r$ from the center $\cen$ to the evaluation point.
The distance  $\re$  could be either smaller than $\remax$ if targets
are away from the surface, equal to $\remax$ if $\evald$ touches the
surface at a single point,  or exceed $\remax$ if there
are several on-surface targets in $\evald$; we focus our attention to the case where $\re \le \remax$.

Let $\uq$ be given by the proxy representation, \cref{eqn:localexp}, with equivalent density values $\alpha_j$ at proxy points $\vy_j$,
$j = 1,\ldots, \np$. We consider evaluation of the approximation $\uq$ in $\disc{B}{\re}{\cen}$,
the disc of radius $\re$ centered at $\cen$, given correct values
for $\u$ at a very large number of check points $\nc$, so that we can
replace the discrete least-squares problem we solve with a continuous
one.


Let the equivalent densities $\scalard{\alpha}_j$ be chosen to minimize the $L^2$ error on the
check circle, i.e.,
\begin{alignln}
  \vectord{\alpha} = \arg \min_{\vectord{\alpha}\in\mathbb{C}^{\np}}
  \|\uq - \u\|_{L_2(\checkc)}~.
\label{npinf}
\end{alignln}
By convergence of the periodic trapezoidal quadrature on the check
points, this corresponds to the $\nc\to\infty$ limit of the \qbkix
scheme.
Let
\begin{equation}
 \err(\re) := \sup_{\vx\in\overline{\Omega}\,\cap\,\disc{B}{\re}{\cen}} |
 \uq(\vx)-\u(\vx)|~, \label{sup}
\end{equation}
be the upper bound on the pointwise error in the part of the disc
lying inside the closure of the domain.
We have the following bounds on $\err$ when $\u$ is sufficiently
regular, meaning that any singularities in the continuation of $\u$ is
further than some distance $\rho>\remax$ from the center of the expansion
$\cen$.

\begin{theorem}
  \label{thm:expconv}
  Let $\u$ be continuable as a regular solution to the Laplace or
  Helmholtz equation in the closed disc of radius $\rho$ centered at
  $\cen$.
  Let $\rp > \remax$ in the Laplace case.
  Let the \qbkix equivalent density values at proxy points be
  solved in exact arithmetic in the
  least-squares sense on the check circle as in \cref{npinf}, and let
  $\err$ be defined by \cref{sup} where $\uq$ is the expansion in
  \cref{eqn:localexp}.
  Then, in a disc of radius $\re$
  \begin{alignln}
    \err(\re) \; \le \left\{\begin{array}{ll}
    C \bigl( \frac{\re}{\rho} \bigr)^{\np/2}, & \rho \re < \rp^2~, \\
    C \sqrt{\np} \bigl( \frac{\re}{\rp} \bigr)^{\np},    & \rho \re = \rp^2~, \\
    C \bigl( \frac{\re}{\rp} \bigr)^{\np},    & \rho \re > \rp^2~,
    \end{array}\right.
    \label{eqn:expconv}
  \end{alignln}
  where in each case, $C$ indicates a constant that may depend on $\u$
  (and $\omega$ in the Helmholtz case), $\cen$, $\re$, and $\rp$ but
  not on $\np$.
\end{theorem}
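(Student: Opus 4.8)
The plan is to work in the frame centered at $\cen$ and to expand everything in the natural separation-of-variables basis. For Laplace in $\R^2$, the proxy representation $\uq$ is a sum of single-layer potentials $\log|\vx - \vy_j|$ with $\vy_j$ equispaced on the circle of radius $\rp$; for Helmholtz the fundamental solutions are $H_0^{(1)}(\omega|\vx-\vy_j|)$. First I would observe that, by the addition theorem (Graf's theorem in the Helmholtz case, the logarithm expansion in the Laplace case), a single proxy source at radius $\rp$ expands, for $|\vx|<\rp$, in the interior solid-harmonic (resp. Bessel) basis $\{e^{in\theta}|\vx|^{|n|}\}$ (resp. $\{J_n(\omega|\vx|)e^{in\theta}\}$), with coefficients that decay like $\rp^{-|n|}$. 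Summing over the $\np$ equispaced proxy points, the discrete angular sum acts as an aliasing projector: the equivalent density $\bal$ determines exactly the Fourier modes $n \equiv 0 \pmod{\np}$ of the represented field's ``multipole'' content — concretely, $\uq$ can reproduce interior modes of order $n$ only through the aliased combination, so the achievable $\uq$ lives in a space whose mismatch with a generic interior solution first appears at mode $|n| = \np/2$ (for the lowest-order missing modes, after the least-squares fit kills the reproducible part). This is the standard "MFS on a circle" picture and is where the exponent $\np/2$ versus $\np$ dichotomy originates.

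Next I would set up the least-squares problem on the check circle $\checkc$ of radius $\rc$ explicitly in Fourier modes. Because the check circle is a circle concentric with everything, the operator $Q$ (continuous version: single-layer from radius $\rp$ to radius $\rc$) is diagonalized by $e^{in\theta}$, with singular values $\propto (\rc/\rp)^{|n|}$ in the Laplace case (up to the $n=0$ logarithmic anomaly, which is why $\rp>\remax$ is assumed — it keeps the constant mode controllable) and the analogous Bessel ratios for Helmholtz. The true solution $\u$, continuable to radius $\rho$, has Fourier coefficients on $\checkc$ that decay like $(\rc/\rho)^{|n|}$. The fitted $\uq$ matches $\u$ exactly in the $\np$ "reproducible" mode classes; the residual on $\checkc$ is dominated by the first unmatched mode. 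Then I would propagate this residual out to radius $\re$: each interior mode $n$ grows by a factor $(\re/\rc)^{|n|}$ from the check circle to the evaluation circle. Assembling the three competing geometric factors — decay of $\u$ from $\rho$ to $\rc$, amplification from $\rc$ to $\re$, and the proxy representation's own decay rate governed by $(\cdot/\rp)^{|n|}$ — the dominant term at mode $|n|\approx \np/2$ (from the aliasing mismatch) contributes $(\re/\rho)^{\np/2}$, while a competing contribution from how well the sources at $\rp$ can represent modes up to order $\np$ gives $(\re/\rp)^{\np}$; the crossover is exactly the condition $\rho\re = \rp^2$, with the logarithmic/polynomial correction $\sqrt{\np}$ at the balance point coming from summing the finitely many modes that are simultaneously near-critical. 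The constants $C$ absorb $\|\u\|$, the $n=0$ anomaly, and the ratios $\rc/\rp$, $\re/\rp$ that are held fixed.

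I would then verify the Helmholtz case is a genuine perturbation of this: Graf's addition theorem replaces solid harmonics by $J_n(\omega r)$, and the relevant ratios of singular values / coefficient decays become ratios of Bessel and Hankel functions evaluated at $\omega\rc$, $\omega\rp$, $\omega\rho$, $\omega\re$. For fixed real $\omega$ and $n\to\infty$ these behave like the same power laws $(\cdot/\cdot)^{|n|}$ up to $n$-independent multiplicative constants (by the large-order Bessel asymptotics $J_n(z)\sim (z/2)^n/n!$, etc.), which is exactly why the theorem allows $C$ to depend on $\omega$ but keeps the same exponents; this is also why no restriction $\rp>\remax$ is needed for Helmholtz — there is no logarithmic zero mode. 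I expect the main obstacle to be making the least-squares residual estimate rigorous rather than heuristic: one must show that the least-squares solution on $\checkc$ does not accidentally put large energy into the (exponentially ill-conditioned) small-singular-value modes in a way that spoils the bound when propagated to $\re$. The key is that, in exact arithmetic, the $L^2$-optimal $\bal$ is exactly the pseudo-inverse applied to the projection of $\u$ onto the range of $Q$, so the residual is the orthogonal complement — modes $n \equiv 0 \pmod{\np}$ that $Q$'s range omits on $\checkc$ plus the non-reproducible part — and each such mode's size is controlled by $\u$'s own decay, with no spurious amplification; the ill-conditioning only bites in finite precision, which is deferred to the heuristic finite-arithmetic model discussed after this theorem. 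Carefully identifying that the ``missing'' modes on $\checkc$ are the same ones that dominate at $\re$, and that their count near criticality is $O(\sqrt{\np})$ (giving the middle case), is the delicate bookkeeping step.
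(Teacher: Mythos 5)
Your Fourier/aliasing bookkeeping identifies the right mechanism and the right exponents, but the step you yourself flag as delicate is, as written, a genuine gap: you analyze the actual least-squares solution and assert that, because its residual on $\checkc$ is the orthogonal complement of the projection of $\u$ onto the range of $Q$, it is ``controlled by $\u$'s own decay, with no spurious amplification.'' Orthogonality only bounds the residual \emph{on the check circle}; the theorem measures the error at radius $\re>\rc$, and continuing $\uq-\u$ outward multiplies its mode-$n$ content by $(\re/\rc)^{|n|}$. To control the high-order (aliased) content of $\uq$ at radius $\re$ you need a bound on the coefficient vector $\bal$ of the least-squares solution itself, and in exact arithmetic that norm can be exponentially large in $\np$: the mode-$m$ coefficient behaves like $|m|(\rp/\rho)^{|m|}$, which grows when $\rho<\rp$ (a case the theorem explicitly allows). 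So ``no spurious amplification'' is precisely what must be proved, not assumed; with such a coefficient bound the products do recombine, e.g.\ $(\rp/\rho)^{\np/2}(\re/\rp)^{\np/2}=(\re/\rho)^{\np/2}$, so your route can be completed, but it amounts to re-deriving the quantitative estimates of Barnett and Betcke \cite{mfs} mode by mode. A smaller slip: the range of $Q$ omits (approximately) the modes $|n|>\np/2$, not the modes $n\equiv 0\pmod{\np}$; each discrete density mode $m$ generates field modes $m+j\np$.

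The paper sidesteps this difficulty with a comparison argument rather than analyzing the least-squares solution directly: it suffices to exhibit one explicit coefficient vector, chosen to cancel all Fourier modes $|n|<\np/2$ of $\uq-\u$ on the check circle; by uniqueness of the local expansion this cancellation then holds on \emph{every} concentric circle of radius less than $\rp$, so the quantitative bound of \cite[Theorem~3]{mfs} applies directly at radius $\re$ \emdash/ which is why $\rc$ is absent from the bound and why the extrapolation causes no trouble in exact arithmetic. The Laplace case is then obtained as a limit of the Helmholtz result, using the fact that the constant single-layer density produces the constant potential $\log(\rp/\rc)$ (whence the hypothesis excluding $\rp=\rc$), the reverse of your ordering (Laplace first, Helmholtz by large-order Bessel asymptotics). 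Finally, the paper includes two finishing steps you omit: passing from the $L^2$ bound on the $\re$-circle to the sup norm via exponential decay of the error's Fourier coefficients, and continuing $\u$ as a regular \pde solution when the disc is not contained in $\Omega$.
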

\begin{proof}
  Following the technique of Barnett and
  Betcke~\cite[Theorem~3]{mfs}, we only need to show that there exists
  some choice of density values $\vectord{\alpha}$ for which the
  estimate holds; the least-squares solution cannot be worse
  than this.
  We choose density values $\vectord{\alpha}$ to cancel the Fourier
  coefficients with frequency $|n|<\np/2$ of the pointwise error $\uq -
  \u$ on the check circle.

  By uniqueness of the local expansion for the regular \pde
  solution (in polar coordinates, $\sum_{n\ge 0}
  a_n r^n e^{in\theta}$ for Laplace or $\sum_{n\in\mathbb{Z}} a_n
  J_n(\omega r) e^{in\theta}$ for Helmholtz) this choice of density
  values  also cancels the same Fourier coefficients on any circle centered at $\cen$ with radius less than $\rp$.
  Applying \cite[Theorem~3]{mfs} for the Helmholtz case, the
  $L^2$-norm of the error on the circle of radius $\re$ obeys a bound
  of the form \cref{eqn:expconv}.
  Barnett and Betcke~\cite[Section~2.1]{mfs} produce the Laplace case as a limit of the
  Helmholtz case; however, one also needs the result that the constant
  single-layer density generates the constant potential $\log \rp/\rc$,
  which excludes $\rp=\rc$ because it can only produce zero-mean data on
  the circle.

  Finally, we need to show that the sup norm of the error on the
  circle of radius $\re$ is bounded by the $L^2$-norm; this holds
  since the error $\uq - \u$ is a regular \pde solution in a disc with
  radius strictly larger than $r$, namely $\disc{B}{\min(\rp,\rho)}{\cen}$.
  Thus, its Fourier coefficients on the $r$-circle decay exponentially
  in $|n|$,
  and are thus summable with a bound controlled by the $L^2$ norm.
  In the case where $\disc{B}{\re}{\cen}$ lies partially outside $\Omega$, one
  may continue $\u$ as a regular \pde solution in the disc and apply
  the above.
  \qed
\end{proof}

\begin{remark}
  The above derivation relies on analysis from the  literature on the
  method of fundamental solutions (\abbrevformat{MFS}).
  The original result for the Laplace equation is due to
  Katsurada~\cite[Theorem~2.2]{Ka89}, which considers the case
  $n_c=n_p$ and restricted to $\re=\rc$.
  We  extend this result to include {\em extrapolation} from the check
  radius $\rc$ out to larger radii $\re$.

  Remarkably, $\rc$ does not appear in \cref{eqn:expconv}, because in
  exact arithmetic it does not matter at what radius the Fourier
  coefficients are matched.
  In the next section we will see that in practice rounding error
  strongly affects the choice of $\rc$ since the extrapolation is ill-conditioned.
  \label{r:rcindep}
 \end{remark}

A surprising aspect of \cref{thm:expconv}
is that $\u$ may have singularities {\em closer} to the
 center than the proxy radius $\rp$ and yet exponential convergence
 still holds; this is closely related to the Runge approximation
 theorem.

\begin{remark}
  The two regimes in \cref{eqn:expconv} may be interpreted as follows:
  \begin{itemize}[\quad$\bullet$]
  \item $\re < \frac{\rp^2}{\rho}$: the solution $\u$ is relatively
    rough (has a nearby singularity), and error is controlled by
    the decay of the local expansion coefficients $a_n$ of $\u$ for
    orders beyond $n_p/2$.
  \item $\re > \frac{\rp^2}{\rho}$: the solution $\u$ is smooth, and
    error is controlled instead by aliasing (in Fourier coefficient
    space) due to the discreteness of the proxy point representation
    on the proxy circle.
  \end{itemize}
  We observe in numerical experiments that when the boundary is
  adaptively refined based on the boundary data as in
  \cref{sec:formulation}, $L\approx\rho$ and the expansion centers that
  dominate the error in a domain are typically those that are near to
  a singularity of the solution.
  Such centers are typically in the rough regime.
\end{remark}

Note that the boundary $\Gamma$ may intersect the closed disc, and
still $\u$ may be continued as a \pde solution into the closed disc.
This requires the boundary data $f$ or density to be
analytic\emdash/see \cite{ce} for related analysis of \qbx in this
case.

\begin{remark}[Extension of analysis to other kernels]
It is clearly of interest to have a \ki extension of
\cref{thm:expconv} that would apply also to vector {\pde}s such as
Stokes.
Initial attempts suggest this requires significantly more complicated
analysis, since to use the method of the above proof
one needs to be able to write down a proxy coefficient vector
$\bm\alpha$ that produces a single Fourier mode on the check circle
plus exponentially decaying amounts of aliased modes, which is
challenging even in the Stokes case.
We leave this for future work.
\label{r:anal}
\end{remark}

\subsection{Modeling the effect of finite-precision arithmetic\label{ssc:err-float}}
Independence from $\rc$ in \cref{thm:expconv} relies on exact
arithmetic;  since the extrapolation from  $\rc$ to a larger $\re$ is ill-conditioned.
Moreover, due to finite precision, there are possibly fewer than $\np$ functions available to cancel the Fourier coefficients.
As a result, we need to study the effect of rounding error on $\uq - \u$.
Rather than attempting a rigorous analysis, we present a heuristic model
and demonstrate that it agrees well with numerical observations.

We first show that the \ordinal{n} singular value of the matrix $Q$ in \cref{Q}
decays as $\frac{1}{n}(\rc/\rp)^{n/2}$, i.e., marginally faster than exponentially.
In the continuous limit ($\np,\nc\to\infty$), this corresponds to the
decay of the eigenvalues of the single-layer operator with kernel $\fundsol$,
whose eigenfunctions are the Fourier modes, since the operator is
convolutional.
For the Laplace equation, the potential defined in polar coordinates
centered at $\cen$ as
\begin{equation*}
  v(r,\theta) = \begin{cases}(\rp/2n) (r/\rp)^n e^{in\theta}~, & r\le
    \rp~, \\ (\rp/2n) (r/\rp)^{-n}e^{in\theta}~, & \text{otherwise}~,
  \end{cases}
\end{equation*}
solves the \pde everywhere except at $r=\rp$, where the jump in radial
derivative is $e^{in\theta}$.
We conclude that  $v$ is the single-layer potential due to the $\ordinal{n}$
Fourier mode density.
Substituting $r=\rc$, and recalling that the
$\ordinal{n}$ singular value is eigenvalue for the frequency $n/2$, as
the frequencies are in the range $-n/2$ to $n/2$, we conclude that
$ \sigma_n = \frac{1}{n}(\rc/\rp)^{n/2}$.

The above argument also applies for the Stokes case except due to having two
vector components, \ordinal{n} singular value of matrix $Q$ corresponds to the eigenvalue for frequency $n/4$.
The Helmholtz case\emdash/although there are \O{\omega} eigenvalues that do
not decay\emdash/is asymptotically identical to Laplace
\cite[Equation~(14)]{mfs}.
To verify this asymptotic behavior,  in \cref{fig:sing-vals} we show the decay of singular
values for several kernels.
\begin{figure}[!b]
  \centering
  \small
  \setlength\figureheight{1.6in}
  \setlength\figurewidth{2.2in}
  \includepgf{sing-values}
  \mcaption{fig:sing-vals}{Singular values of proxy to check matrix}{
    The solid lines are the singular values of $Q$ for different $\rp$
    and different \sl kernels, and the dashed lines labeled $(T)$ are
    the theoretical decay: $\frac{1}{n}(\rc/\rp)^{n/2}$ for Laplace or
    Helmholtz, and $\frac{1}{n}(\rc/\rp)^{n/4}$ for Stokes, where $n$
    denotes the index of the singular value.
    Other parameters are $\rc=1$, $\np=128$, $\nc=256$.
    For the Helmholtz problem, the dashed lines show the asymptotic
    bound for the singular values and are not accurate for small
    indices; the interested reader is referred to
    \cite[Eq.~(14)]{mfs}.
    }
\end{figure}

When the pseudoinverse of $Q$ is computed based on \cref{eq:pinv-sing-val},  only
$k$ singular values lying above $\apinv \sigma_1$ are retained.
The corresponding singular vectors approximate the lowest Fourier
modes up to frequency $|n|<k/2$ (in the scalar \pde cases).
Thus, equating up to constants the $\ordinal{k}$ singular value above
to $\apinv$, the ranks of the matrices in the pseudoinverse are
\begin{equation}
k \;\approx\; \min \left(k_m, \, \np\right),\qquad k_m = \,2
\frac{\log(1/\apinv)}{\log(\rp/\rc)} ~,
\label{k}
\end{equation}
and the highest (Nyquist) frequency they can represent is $k/2$.

The values of $\un$ at the check points have error bounded by $\ech$,
so in this model we expect the errors to be
amplified (by considering the local expansion as above) to become
$\ech (\re/\rc)^{k/2}$ at the evaluation radius $\re$.


\subsection{Error bounds and optimal parameter choices\label{ssc:error-bounds}}
Combining the results from \cref{ssc:err-exact,ssc:err-float} for a \ki
expansion, using $\np$ proxy points, the error is bounded by
\begin{alignln}
  \label{eqn:err-bound}
  \err(\re) \; \le \left\{\begin{array}{ll} C \left( \dfrac{\re}{\rho}
  \right)^{k/2}+ C \ech \left(\dfrac{\re}{\rc}\right)^{k/2}, & \rho \re
  < \rp^2~, \\[10pt] C \left( \dfrac{\re}{\rp} \right)^{\np}+ C \ech
  \left(\dfrac{\re}{\rc}\right)^{k/2},& \rho \re > \rp^2~,
  \end{array}\right.
\end{alignln}
where $C$ represents possibly different constants in each case
(omitting the case $\rho r = \rp^2$).

\begin{figure}[!b]
  \centering
  \small
  \setlength\figureheight{2in}
  \setlength\figurewidth{2.2in}
  \hspace{-3em}\includepgf{error-bounds}

  \mcaption{fig:err-bound}{Error bounds for Laplace \qbkix with known
    singularity}{Errors $\err$ observed (solid lines) and predicted by
    \cref{eqn:err-bound} (dashed lines) for a single expansion with
    different singularity distances $\rho=2\rp,
    \rp,\text{~and~}0.8\rp$, and different numbers of proxy points
    $\np$.
    The expansion is centered at $\vector{c} = [0,0]$ and the solution
    $\u(\vx) = -\log|\vx-\vx_0|$, $\vx_0 = \rho e^{1\ii/19}$ is a
    harmonic function with a singularity at distance $\rho$.
    Laplace \sl kernel is used for the expansion.
    The error is the maximum error over the $\disc{B}{r}{\vector{c}}$ as
    defined in \cref{sup}.
    The proxy to check radius ratio is $\rp/\rc=8$, the number of
    checks is set to $\nc=2\np$, $\ech=\sci{-14}$, and $k_m \approx 27$
    (given by \cref{k} with $\apinv=\sci{-14}$).
    The constants $C$ in \cref{eqn:err-bound} were chosen to
    qualitatively match the trend lines (all set to $0.1$).

 }
\end{figure}
In \cref{fig:err-bound}, we show how this formula models the error
growth for a single \ki expansion interpolating a Laplace solution in
free space with a known nearest singularity at various distances
$\rho$, for a typical choice of ratio $\rp/\rc = 8$.
The key observation is that, despite its simplicity, our model
\cref{eqn:err-bound} explains well the observed error behavior.
Other salient features of the plots include:
\begin{itemize}
\item As $r$ increases beyond $\rc$, errors grow rapidly  dominated by the second term in the error estimate.
\item The error is mostly controlled by $k$ and increasing $\np$
  beyond $k_m\approx 27$ (defined in \cref{k}) has no tangible effect unless
  $\rho\re>\rp^2$ (i.e., right half of left plot).
\end{itemize}
\Cref{fig:param-reval} instead continuously varies $\rp/\rho$ (the
inverse scaled singularity distance), showing the same effect: a
relatively distant singularity allows high accuracy expansion out to
larger $\re/\rp$.
\begin{figure}[!bt]
  \centering
  \setlength\figureheight{2in}
  \includegraphics[height=\figureheight]{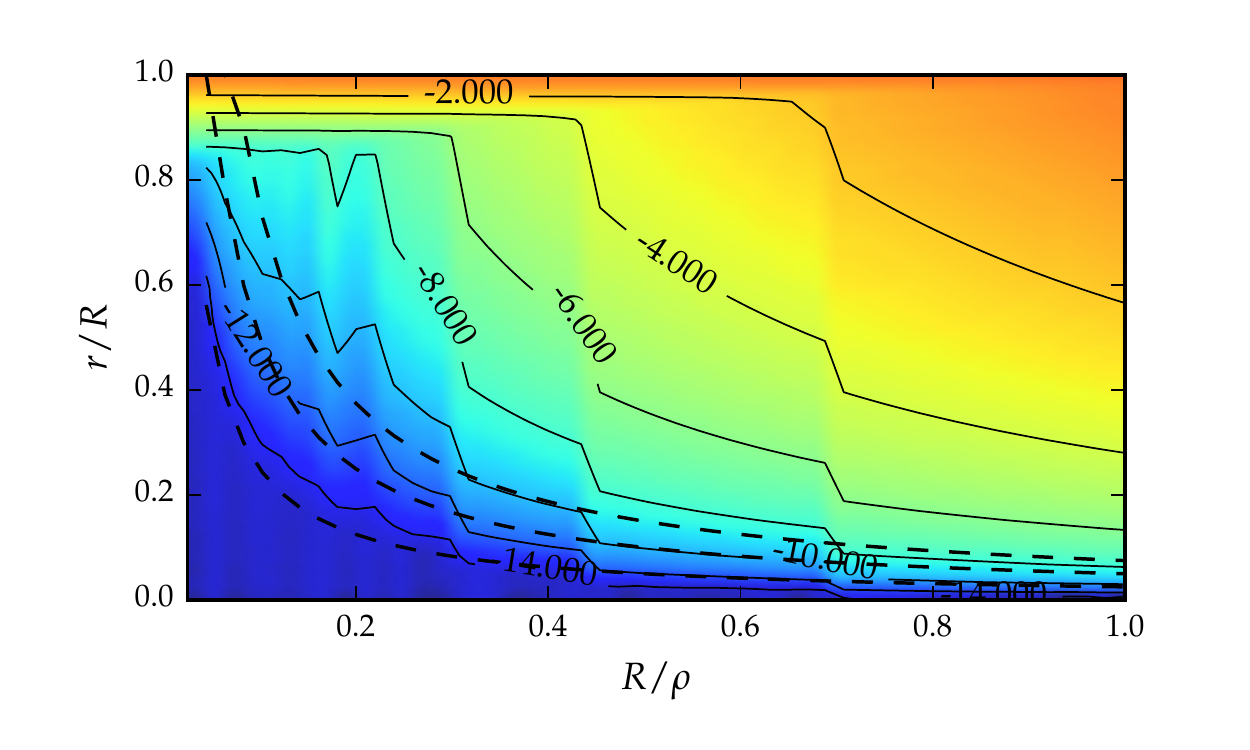}
  \mcaption{fig:param-reval}{Error at different evaluation radii}{ The
    error for evaluation of a single expansion with various $\rp$ and
    $\re$, but fixed $\rc = \rho/40$ and $\rho$.
    The expansion is interpolating a harmonic function (similar to the
    one used in \cref{fig:err-bound}) with singularity at distance
    $\rho=4$, using the Laplace \dl kernel.
    The dotted lines are $\re = m \rc$ for $m=1,2,\text{ and } 3$.
    In practice, we have no direct control on $\frac{\rp}{\rho}$,
    and it is implied by the panel size.
    Here we chose $\np=64$, and $\nc=2\np$; the trends are the same
    for lower $\np$ and $\nc$.
  }
\end{figure}

\para{Choice of parameters} Using  the model \cref{eqn:err-bound}, one can make choices for $\rp$, $\rc$,
$\remax$, and $\np$ to achieve a desired accuracy $\acc$.
An unknown in applying this in a practical setting
is the singularity distance $\rho$.
However, in any high-accuracy choice of boundary quadrature, such as
the adaptive panel quadrature of \cref{sec:formulation}, panels are
refined such that the data $f$ and hence the density $\phi$ and the
solution $\u$ are smooth on the local panel scale $L$, thus we expect
singularities to be at least of order $L$ distant from the center.
Indeed, we experimentally observe (in tests where we know the location of
singularity, e.g., \cref{fig:center-dist} or \cref{s:bvp}) that when the panels are adaptively refined, $L<\rho$,
and consequently the convergence behavior is most like the left-hand
plot of \cref{fig:err-bound}.

Given the target accuracy of $\acc$ for the solution and the selected
native quadrature order $q$, the adaptive refinement of boundary sets
the panel length $L$. We use the following steps to glean the value of
other parameters. Since the constants in the error estimates are
problem dependent and unknown, we set them to unity. To have a
concrete example, we pick $\acc=\sci{-10}$ and $q=16$.
\begin{enumerate}[\quad(1)]
\item \lhb{Setting $\remax$:} By construction, points farther than
  $2\remax$ from the boundary are evaluated using the native
  quadrature. To meet the desired error $\acc$ at these points,
  $\frac{L}{\remax} \approx 8 \acc^{1/2q}$, which implies
  $\remax\approx L/4$ for $\acc=\sci{-10}, q=16$.
\item \lhb{Setting $k_m$, $\rp/\rc$, and $\np$:} Requiring that the
  two terms in the error estimate (i.e., proxy point representation
  and extrapolation errors) have similar contribution at the on
  surface point ($\re=\remax$) and assuming that $L \approx \rho$ we
  can estimate the minimum required $k$ based on the proxy
  representation error in the rough regime:
  \begin{align}
    \left(\dfrac{\remax}{\rho}\right)^{k/2} \approx \acc \quad
    \text{or} \quad k \approx \dfrac{2\log\acc}{\log{(\remax/L)}}~,
  \end{align}
  implying $k\approx 32$ for $L/\delta=4, \acc=\sci{-10}$.  Since $k$
  is bounded by $\min(k_m,\np)$, knowing minimum $k$ implies a lower
  bound for $k_m$ and $\np$. Therefore, reorganizing \cref{k}, we have
  ${\rp}/{\rc} = \apinv^{2/k} \approx 7$, for $\apinv=\sci{-14}$.

\item \lhb{Setting $\rc/\remax$ and $\beta$:} Inspecting the
  extrapolation error at an on surface point, we have
  \begin{align}
    e_e(\remax) \approx \ech \left(\dfrac{\remax}{\rc}\right)^{k/2} \approx
    \left(\dfrac{L}{4 \beta (\remax - \rc)}\right)^{2q}
    \left(\dfrac{\remax}{\rc}\right)^{k/2} \approx \left(\dfrac{L}{4 \beta
      \remax}\right)^{2q} \dfrac{1}{(1-\theta)^{2q}\theta^{k/2}}~,
  \end{align}
where $\theta = \rc/\remax$. This expression attains its minimum at
$\theta = \frac{k}{4q+k}$. For $q=16$ and $k=32$, we have $\theta =
1/3$. As we require that two terms in the error estimate have similar
contribution, we use $e_e(\remax)$ and estimate $\beta$:
\begin{align}
  \beta \approx \frac{L/4\remax}{(1-\theta)\theta^{k/4q} \acc^{1/2q}}~,
\end{align}
implying $\beta = 5$, for the choices of parameter listed above.
\end{enumerate}





Note that we have not analyzed the effect of finite $\nc$,
but find that the choice $\nc=2\np$ behaves indistinguishably
from the limit $\nc\to\infty$; we attribute this to the rapid
convergence of the periodic trapezoid rule on the check points.
\begin{figure}[!bt]
  \centering
  \small
  \setlength\figureheight{2.2in}
  \setlength\figurewidth{2.2in}
  \includepgf{center-dist}
  \mcaption{fig:center-dist}{Error vs. center and singularity
    distances}{The induced error for singularities and centers at
    various distances from the boundary for the Laplace Dirichlet
    interior \abbrev{BVP}, in the domain shown in \cref{fig:inf-error}.
    The boundary data is generated by putting a Laplace singularity at
    distance $\tilde\rho$ from the boundary\emdash/the singularity
    distance to the center of expansion is $\rho \ge \tilde \rho +
    \remax$.
    The density is solved directly and \qbkix is used only for evaluation.
    The error is computed using the known solution corresponding to
    the boundary data.
    The left plot shows the errors for the case with fixed number of
    panels on the boundary ($M=40$ panels).
    In this plot, because $L$ is fixed, $L/\tilde\rho$ is decreasing
    by increasing $\tilde\rho$.
    The right plot shows the errors for adaptive refinement of the
    boundary with $\atol=\sci{-11}$.
    Here, since $L$ is chosen adaptively due to the boundary data, it
    increases as the solution becomes smoother.
    Because, $L$ is chosen proportional to $\tilde\rho$, the error
    curves almost collapse to one.
    We use $\np=64$, $\nc=2\np$, $\rc=\remax/3$ and $\rp=8\rc$.
    In both cases, the center of expansion is located based on the
    panel size at distance $\remax$.
  }
\end{figure}

\section{Numerical experiments\label{sec:results}}
In this section, we present the results of numerical tests demonstrating the
accuracy and versatility of the \qbkix algorithm for
on-surface evaluation needed for the boundary integral equation solver
and  solution evaluation close to the surface.
In the following experiments, unless noted otherwise, we use \qbkix
for both tasks.

\subsection{Convergence with respect to the number of panels}
In \cref{tbl:m-conv}, we report the convergence of the solution
evaluated at the interior points using non-adaptive boundary
quadrature with increasing number of panels.
The test solution is the potential due to a  set of singularities
at the source points shown outside the domain.
These source points are used to generate the boundary data $f$ and the
reference solution to check the error.
For all problems, the \dl formulation is used, except for the
Helmholtz for which a combined-field formulation $u = \conv{D}[\phi] +\ii
\omega \conv{S}[\phi]$, where $\conv{S}$ is the single-layer potential
\cite[Section~3.2]{coltonkress}, is used. This representation addresses
problems associated with resonance of the complementary domain.
The \dl (or combined-field) density $\phi$ is solved using \qbkix to
evaluate the matrix-vector product in each iteration of \gmres.
The error in the density is quantified by computing the solution from $\phi$, \cref{eqn:generic-bi-sol}, at a set of target points in the interior of the domain.
For the first three kernels, which are smooth, we also report the
convergence using the \nystrom
(\emph{direct}) evaluation, \cref{Anyst}, which by comparison against one- or
two-sided \qbkix shows how much of the error is due to \qbkix.

In all cases, it can be seen that \qbkix gives high-order convergence
rate that is independent of the type of the kernel.
We notice that the error performance of the two-sided variant is
worse than one-sided at the same number of panels (however,
as we discuss below, it is valuable since it improves the convergence
rate of \gmres).

\begin{table}[!bt]
  \newcolumntype{C}{>{\centering\let\newline\\\arraybackslash\hspace{0pt}$}c<{$}}
  \newcolumntype{R}{>{\centering\let\newline\\\arraybackslash\hspace{0pt}$}r<{$}}
  \newcolumntype{L}{>{\centering\let\newline\\\arraybackslash\hspace{0pt}$}l<{$}}
  \newcolumntype{A}[1]{>{\centering\let\newline\\\arraybackslash\hspace{0pt}}D{.}{.}{#1}}
  %
  \centering
  \scriptsize
  \setlength{\tabcolsep}{4pt}
  \begin{tabular}{c l l CCCC}\toprule
    Geometry                                               & Kernel                                    & Quadrature   & \multicolumn{4}{c}{Absolute error (Number of panels)}                              \\\midrule
    \multirow{3}{*}{\includegraphics[height=.35in,width=.35in]{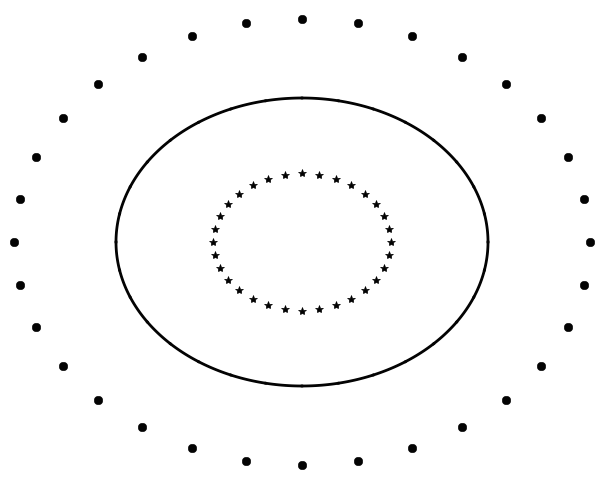}}  & \multirow{3}{*}{Laplace}                  & Direct       & 2.90\e{-06}~(2)  & 9.46\e{-10}~(4)  & 6.42\e{-14}~(6)  & 1.98\e{-14}~(8)  \\
                                                           &                                           & \qbkix (one) & 3.39\e{-06}~(2)  & 9.69\e{-10}~(4)  & 4.46\e{-12}~(6)  & 3.54\e{-12}~(8)  \\
                                                           &                                           & \qbkix (two) & 2.25\e{-05}~(2)  & 4.07\e{-07}~(4)  & 2.24\e{-08}~(6)  & 2.37\e{-09}~(8)  \\\midrule
    \multirow{14}{*}{\includegraphics[height=1in,width=1in]{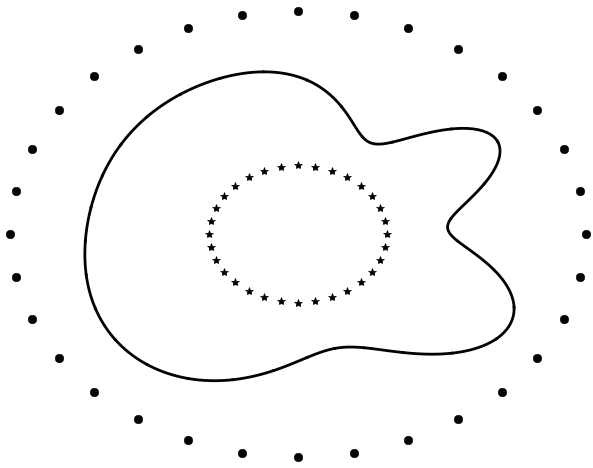}} & \multirow{3}{*}{Laplace}       & Direct       & 5.80\e{-07}~(6)  & 8.52\e{-07}~(8)  & 1.67\e{-09}~(10) & 5.65\e{-12}~(12) \\
                                                           &                                           & \qbkix (one) & 2.49\e{+00}~(6)\ensuremath{^1}  & 1.32\e{-04}~(8)  & 4.62\e{-09}~(10) & 3.09\e{-09}~(12) \\
                                                           &                                           & \qbkix (two) & 4.29\e{-01}~(6)  & 3.06\e{-04}~(8)  & 4.25\e{-07}~(10) & 1.50\e{-07}~(12) \\\cmidrule(l){2-7}
                                                           & \multirow{5}{*}{Stokes}                   & Direct       & 1.48\e{-04}~(6)  & 6.67\e{-05}~(8)  & 6.51\e{-08}~(10) & 6.06\e{-10}~(12) \\
                                                           &                                           & \qbkix (one) & 2.89\e{-08}~(20) & 4.78\e{-09}~(24) & 1.73\e{-09}~(28) & 6.38\e{-10}~(32) \\
                                                           &                                           & \qbkix (two) & 6.95\e{-06}~(16) & 4.87\e{-08}~(32) & 3.31\e{-09}~(48) & 9.45\e{-10}~(64) \\\cmidrule(l){2-7}
                                                           & \multirow{2}{*}{Helmholtz$^{2}$ ($\omega=2$)}   & \qbkix (one) & 2.12\e{-04}~(8)  & 1.20\e{-09}~(12) & 4.22\e{-10}~(16) & 2.09\e{-11}~(20) \\
                                                           &                                           & \qbkix (two) & 3.97\e{-04}~(8)  & 1.91\e{-07}~(12) & 3.42\e{-08}~(16) & 7.92\e{-09}~(20) \\\cmidrule(l){2-7}
                                                           & \multirow{2}{*}{Yukawa  ($\lambda=2$)}    & \qbkix (one) & 1.60\e{-04}~(8)  & 6.42\e{-07}~(12) & 3.84\e{-09}~(16) & 1.48\e{-09}~(20) \\
                                                           &                                           & \qbkix (two) & 5.44\e{-04}~(8)  & 1.27\e{-07}~(12) & 2.19\e{-08}~(16) & 4.79\e{-09}~(20) \\\cmidrule(l){2-7}
                                                           & \multirow{2}{*}{Elastostatic ($\nu=0.1$)} & \qbkix (one) & 2.07\e{-03}~(8)  & 7.16\e{-06}~(12) & 4.35\e{-07}~(16) & 7.19\e{-07}~(20) \\
                                                           &                                           & \qbkix (two) & 3.17\e{-02}~(8)  & 1.27\e{-05}~(12) & 2.26\e{-06}~(16) & 6.77\e{-07}~(20) \\
    \bottomrule
    \multicolumn{7}{l}{\parbox{.95\linewidth}{
        \vspace{.5em}
        \addtocounter{footnote}{1}
        \thefootnote~~When there are a few panels on the boundary, a
        check circle  may be placed near other panels which adversely
        affects the error.
        \newline
        \addtocounter{footnote}{1}
        \thefootnote~~For Helmholtz equation, we use a combined field formulation.
    }}
  \end{tabular}
  \mcaption{tbl:m-conv}{Solution convergence vs. number of panels}{
    Error in the solution to interior Dirichlet boundary value
    problems using non-adaptive $M$-panel quadrature and \qbkix for
    solution.
    The subplots show $\Gamma$ (solid) and the exterior sources used
    to generate the solution, and interior test points.
    There are 40 source points outside the domain and error is
    measured on 40 points inside.
    The error is the maximum of absolute error over these interior
    points.
    The numerical parameters are $\np=32$, $\nc=2\np$, $\rp=8\rc$, and
    $\remax=3\rc$.
    ``Direct'' indicates usage of the quadrature of \cref{Anyst}
    instead of \qbkix for the linear solve. ``One'' and ``two''
    indicate one- or two-sided versions of on-surface \qbkix  discussed in \cref{s:sided}.
  }
\end{table}

\subsection{Operator spectrum and \gmres convergence rate\label{ssc:spectrum}}
We now perform numerical tests of the one-sided and two-sided variants
of on-surface evaluation of \qbkix discussed in \cref{s:sided} and
compare it to direct use of an accurate quadrature. To simplify
comparisons, we use an operator with a smooth kernel (Laplace). The
spectra and convergence behavior for singular kernels is similar.
In \cref{fig:spectrum} we plot\emdash/for the domain shown in
\cref{fig:inf-error} and the Laplace equation\emdash/ the eigenvalues
for four different approximations to the operator $-\half + D$:
one-sided (interior) \qbkix, the one-sided (exterior) \qbkix,
two-sided \qbkix, and the quadrature given by
\cref{Anyst}, to which we refer as \emph{direct}.
The exterior version of \qbkix is constructed similarly to the
interior variant discussed in \cref{sec:formulation}.
The only modification is that for each collocation point $\vx_0$ on
$\Gamma$, we place an expansion center at $\cc = \vx_0 + \delta \vn$.
We see that the one-sided variants have clusters of eigenvalues
near zero, whereas the two-sided variant and the \nystrom
matrix have a cleaner  spectrum with eigenvalue clustering only around $\frac{1}{2}$.

A broader spread of the eigenvalues has a negative impact on
\gmres convergence \cite{nachtigal1992a}. \cref{fig:spectrum}, right,
shows \gmres residual versus the iteration number for
the interior, two-sided, and direct operators with two different
right-hand sides (boundary data corresponding to a harmonic function and a
random right-hand side).

The convergence of one-sided interior \qbkix is identical to the
\nystrom method convergence up to the residual magnitude on the order
of numerical accuracy of \qbkix, but it slows down once the residual
decreases below this value (near \sci{-9}).
The two-sided variant has identical convergence behavior to the
direct method, and converges in a few iterations.
We also show the residual for a random-right hand side to expose the
effect of near-zero eigenvalues: we see that convergence is very slow
for the one-sided scheme in this case, but for the two-sided scheme it is the same as for
the true smooth data $f$.

\begin{figure}[!bt]
  \centering
  \small
  \setlength\figureheight{1.3in}
  \setlength\figurewidth{2in}
  \includepgf{spectrum}
    \mcaption{fig:spectrum}{The spectra of discretizations of
    the Laplace \dl operator}{This figure shows eigenvalues, and the \gmres
    convergence rate, for different discretizations of the Laplace \dl
    operator in the domain shown in \cref{fig:inf-error}.
    The left plots show the real part and the magnitude of the
    eigenvalues corresponding the one-sided interior \qbkix, one-sided
    exterior \qbkix, two-sided \qbkix, and the plain \nystrom matrix.
    See \cref{s:sided,ssc:spectrum}.
    The right plot shows the residual versus the iteration number for
    the three interior variants with two different right hand sides (boundary
    data corresponding to a harmonic function or random data).
    The residual of the two-sided and \nystrom schemes are indistinguishable.
    }
\end{figure}

\subsection{Error for Dirichlet problems for five {\pde}s\label{s:bvp}}
For this set of tests, we use adaptive refinement as described in
\cref{sec:formulation}.
We use \qbkix both as the on-surface quadrature scheme when
solving for the desired density as well as the evaluator for the
near-singular integrals.
As before, we use boundary data sampled from a sum of fundamental
solutions centered at a set of points close to the boundary.
\cref{fig:inf-error} plots the error across the domain for all of
the~{\pde}s listed in \cref{eqn:pde-type}, on the points lying on a
$600\times 600$ grid and interior to the domain.
When an evaluation point is within $2\delta$ distance from the
boundary, it is evaluated using the nearest \qbkix expansion.
The remaining points are evaluated using \cref{eqn:smooth-quad}
applied to \cref{eqn:generic-bi-sol}.

\begin{figure}[!tb]
  \centering
  \setlength\figureheight{1.8in}
  \subfloat[Laplace $(M=30)$\label{sfg:laplace}]{\includegraphics[height=\figureheight]{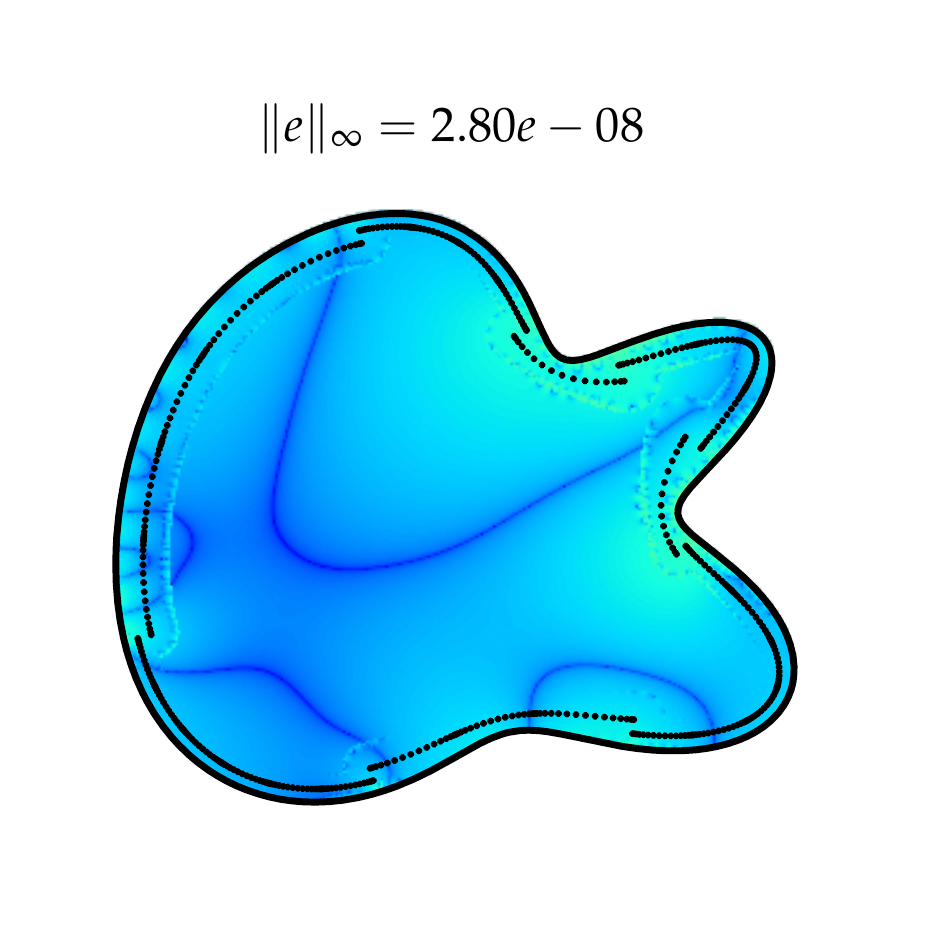}}
  \subfloat[Helmholtz $(M=30)$\label{sfg:helmholtz}]{\includegraphics[height=\figureheight]{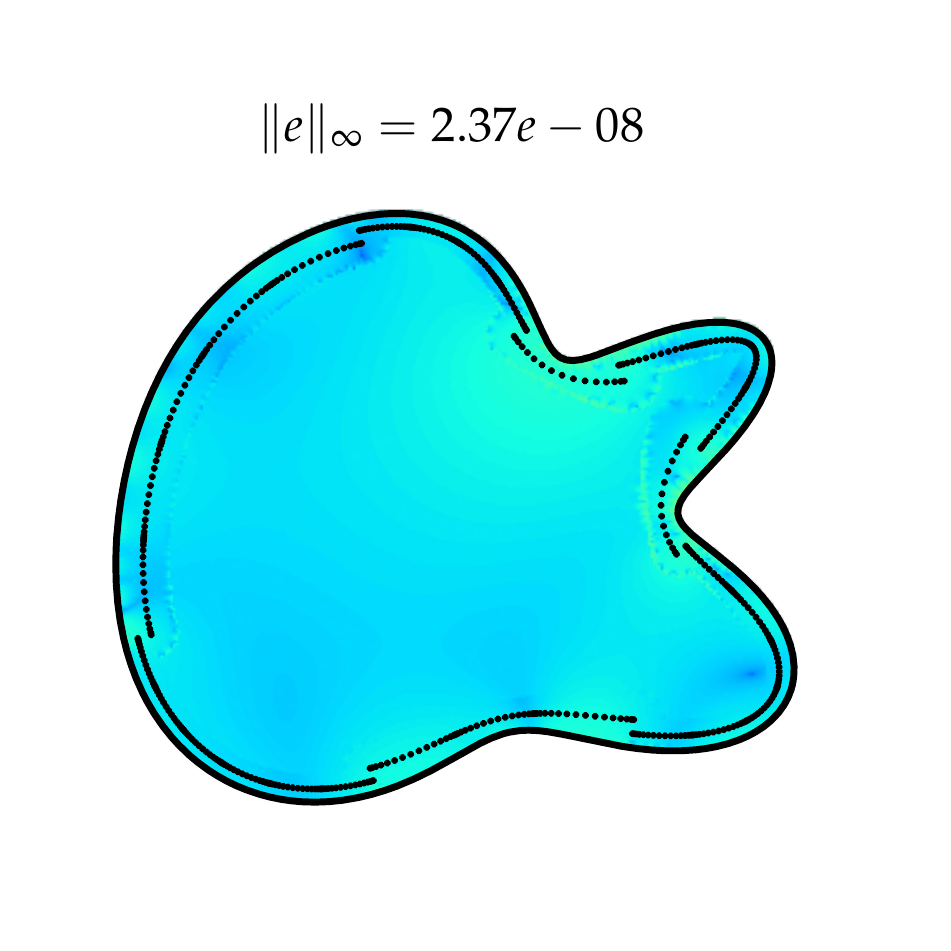}}
  \subfloat[Yukawa $(M=30)$\label{sfg:yukawa}]{\includegraphics[height=\figureheight]{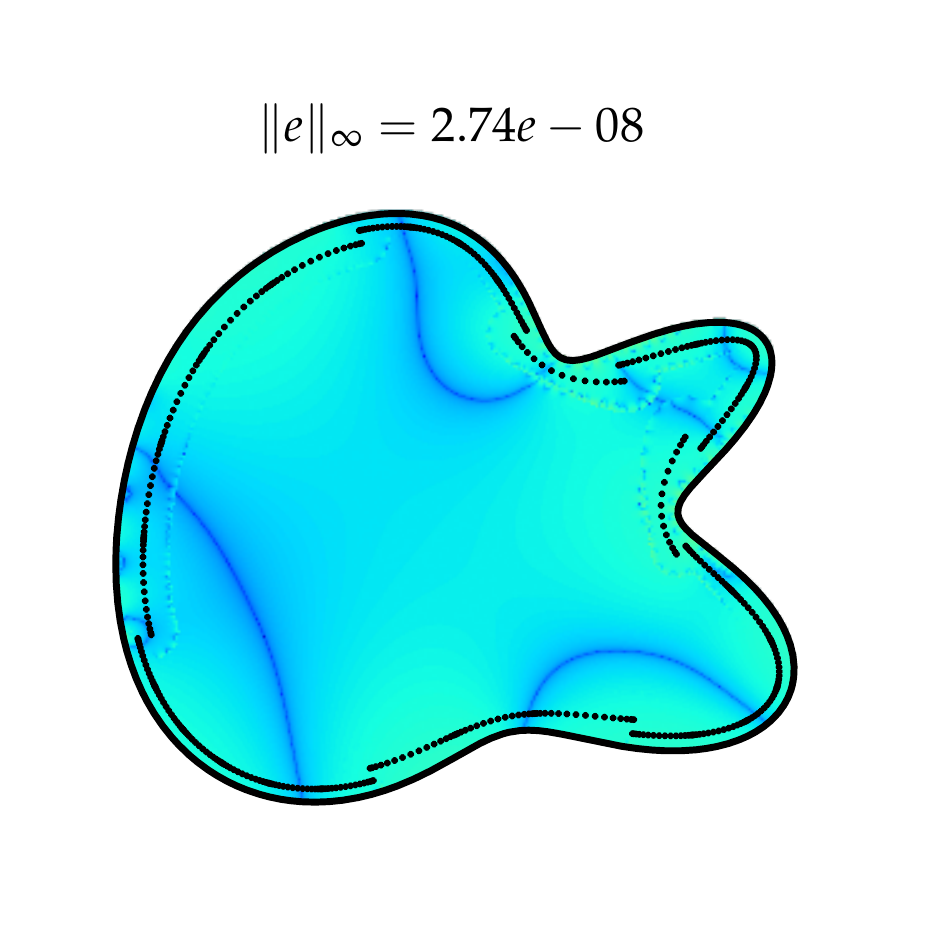}
    \includegraphics[height=.95\figureheight]{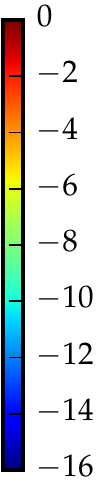}}\\
  \subfloat[Stokes velocity $(M=48)$\label{sfg:stokes}]{\includegraphics[height=\figureheight]{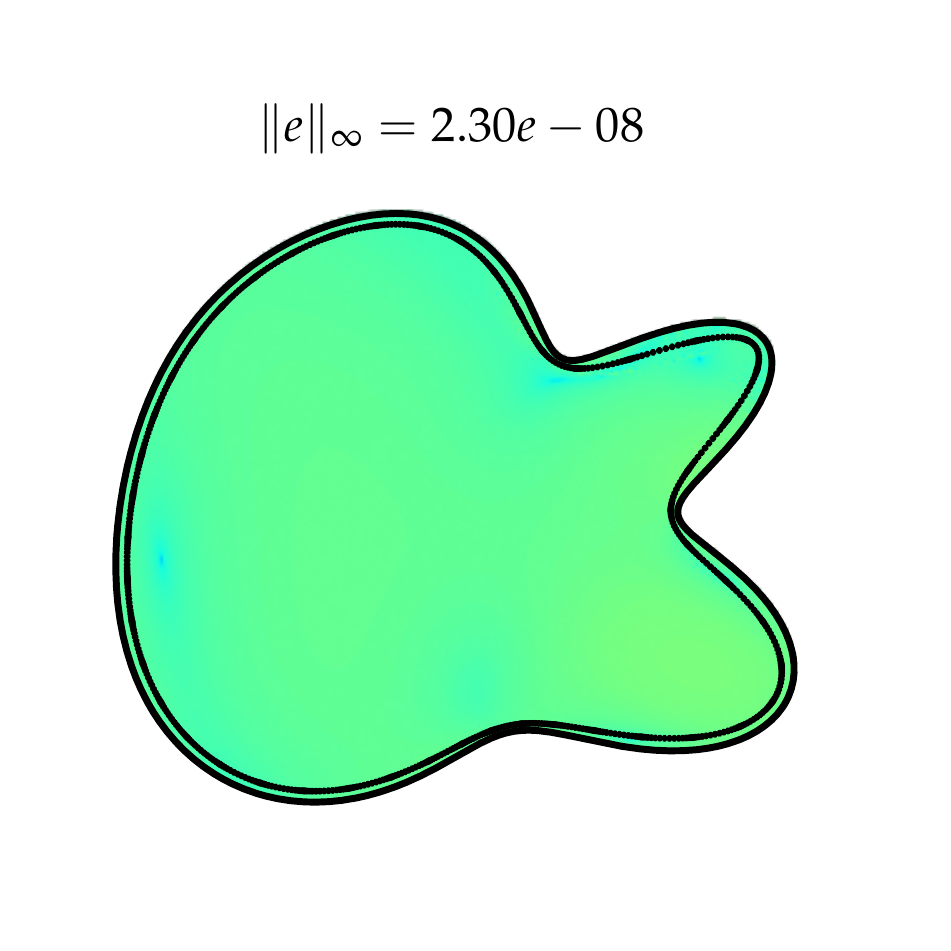}}
  \subfloat[Elastostatic $(M=44)$\label{sfg:elastostatic}]{\includegraphics[height=\figureheight]{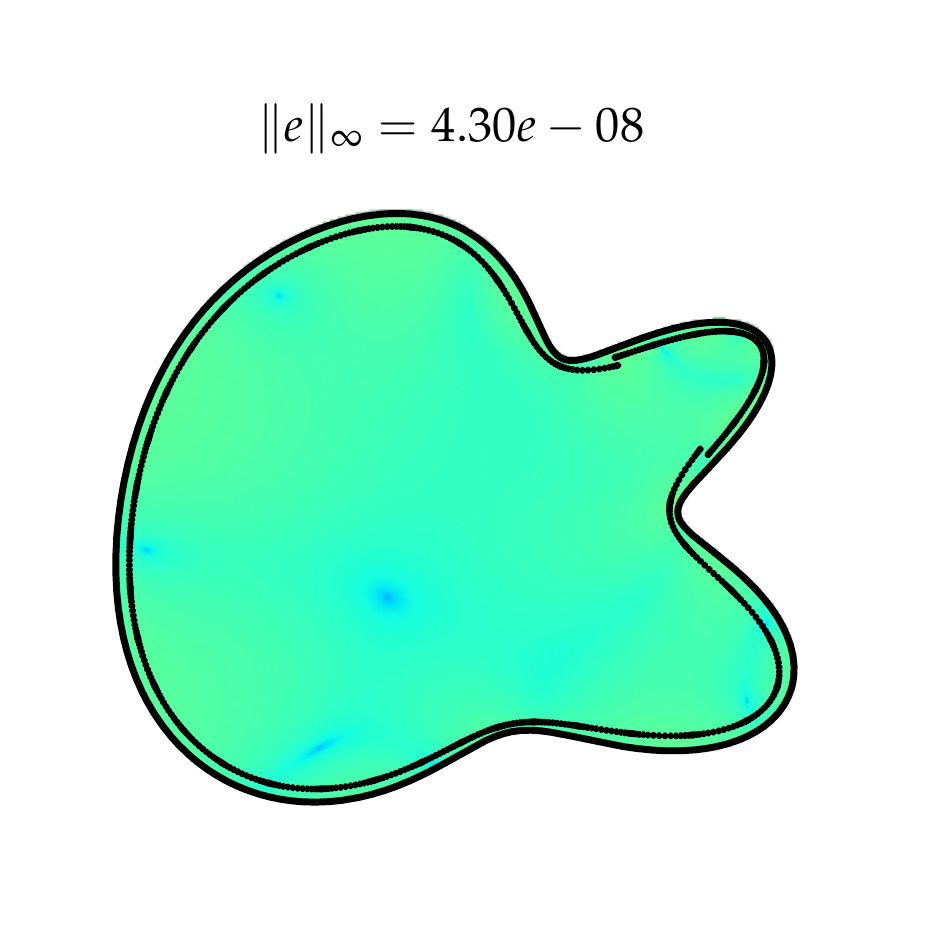}
    \includegraphics[height=.95\figureheight]{interior-cbar}}\\
  \subfloat[Smooth stokes velocity $(M=26)$\label{sfg:stokes-velocity}]{\includegraphics[height=\figureheight]{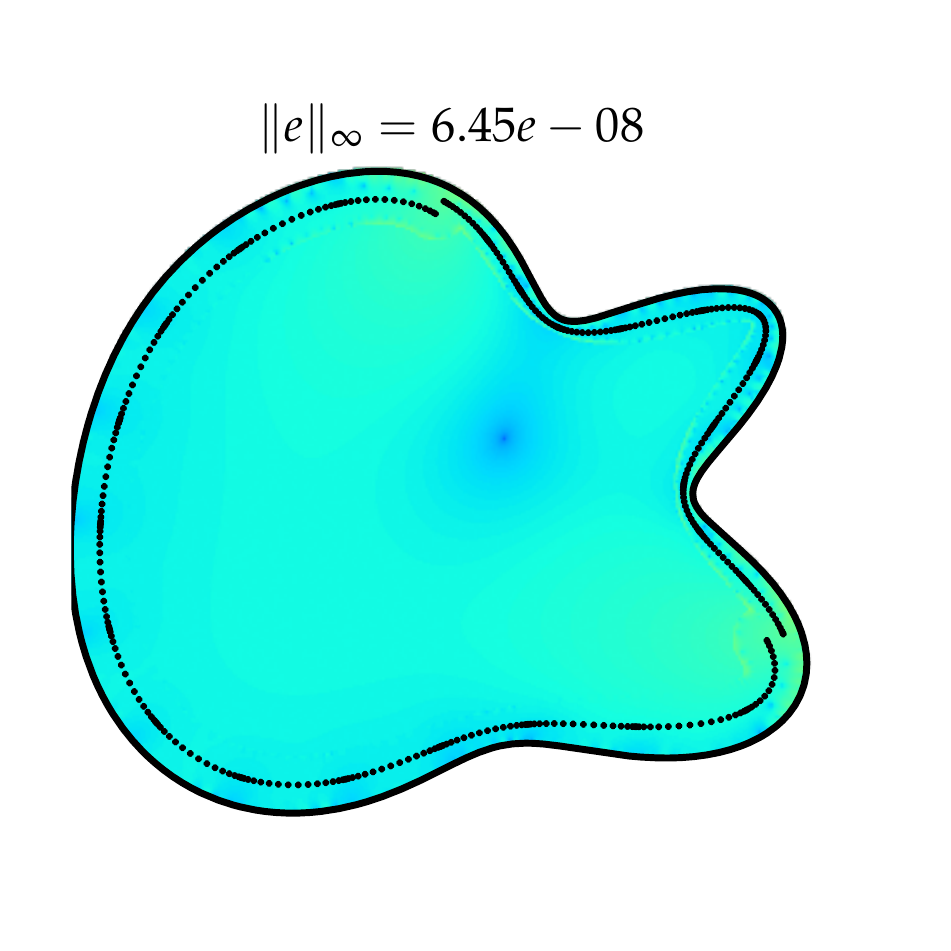}}
  \subfloat[Smooth stokes pressure $(M=26)$\label{sfg:stokes-pressure}]{\includegraphics[height=\figureheight]{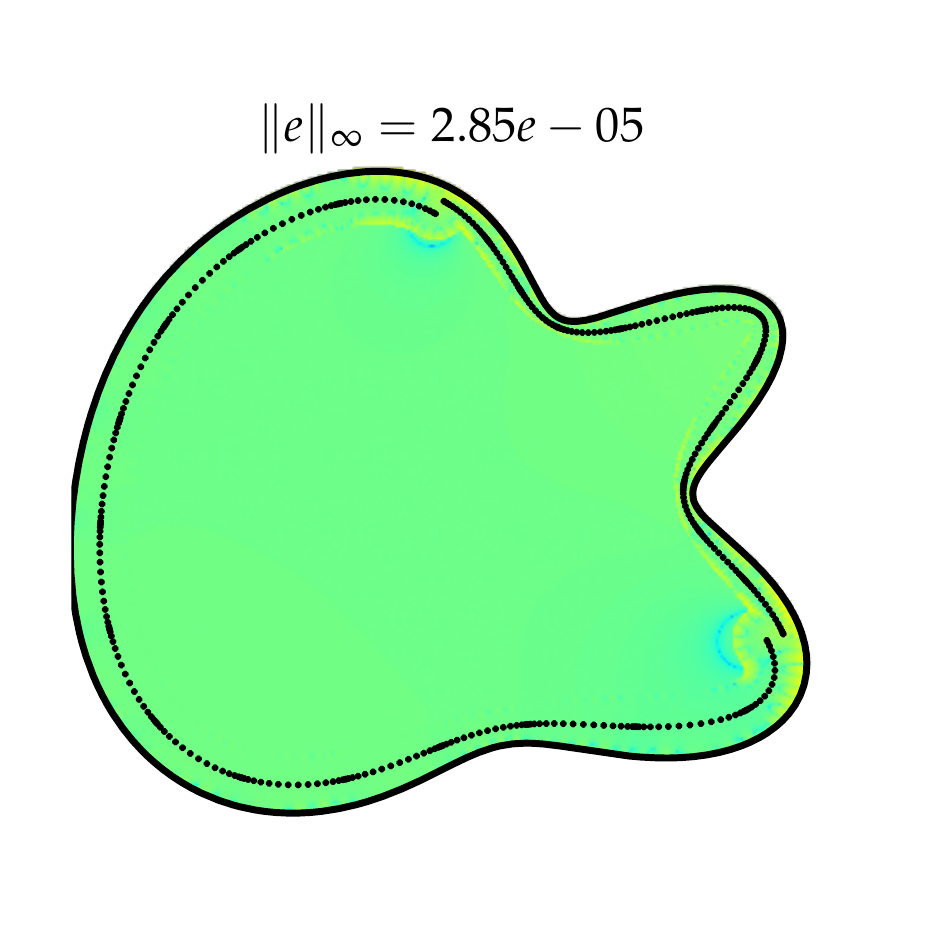}
    \includegraphics[height=.95\figureheight]{interior-cbar}}
  \mcaption{fig:inf-error}{The $\log_{10}$ of pointwise error}{ The
    interior Dirichlet boundary value problem is solved with known
    solution generated by source points distributed over an exterior
    circle as shown in the lower figure in \cref{tbl:m-conv}, apart
    from in \subref{sfg:stokes-velocity} and
    \subref{sfg:stokes-pressure} where we use the cubic flow with velocity
    $\vu=[y^3,x^3]$ and pressure $p=6xy$.
    Error is evaluated on the same fine grid used for visualization
    ($600\times 600$).
    We use $q=16$ node \gl panels and set $\atol=10^{-13}$ in the
    adaptive panel quadrature set-up.
    $M$ denotes the number of boundary panels.
    The expansion centers $\cen$ are shown by black dots close to the
    boundary.
  }
\end{figure}

We observe that parameter choices which were selected for the Laplace
equation perform well for the other {\pde}s.
As expected, the highest error is due to expansions for panels
adjacent to larger ones (e.g. \cref{sfg:laplace}).

\subsection{Domain with a large number of corners\label{ssc:corner}}
As a final example, we use \qbkix in a domain with 256 corners as
shown in \cref{fig:corner}.
A Laplace boundary value problem is solved using \gmres with tolerance
for relative residual set to $\rtol = \sci{-6}$.
The boundary condition is generated similar to the examples in
\cref{s:bvp} by placing 32 source points on a circle with radius 0.75
centered at $[0.5,0.5]$ (the domain's bounding box is
$[0,1]\times[0,1]$).

The boundary of the domain is adaptively refined, with minimum panel length set to $\acc_l=\rtol/10$.
Large panels are also refined based on the adaptive criterion we
outlined in \cref{sec:formulation}.
The dyadic and adaptive refinements result in a total of 9560 panels.


Due to the singularities on the boundary, the system matrix is
ill-conditioned.
The ill-conditioning is greatly reduced using left and right
preconditioners with square root of smooth quadrature weights on its diagonal
\cite{bremer2012}, solving for density in $L^2$ sense.
Considering this preconditioning and since the last panel in each side of
the corner is of length smaller than $\rtol/10$, we set the density on
those panels to zero (effectively deleting the last two panels).
The \gmres converges after 33 iterations;  we use \kifmm (with
accuracy set to $\rtol/10$) for fast evaluation.

\begin{figure}[!tb]
  \centering
  \setlength\figureheight{2in}
  \setlength\figurewidth{2in}
  \subfloat[The $\log_{10}$ of pointwise error]{\includegraphics[width=\figurewidth]{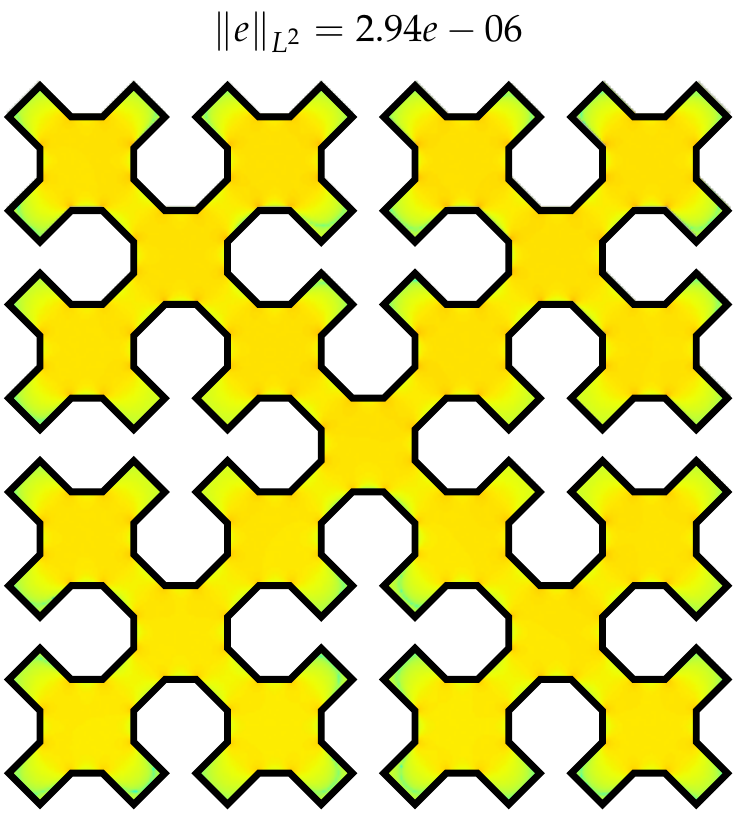}
    \hspace{1em}\includegraphics[height=\figureheight]{interior-cbar}}
  \hspace{.15\figurewidth}
  \subfloat[The solution]{\includegraphics[width=\figurewidth]{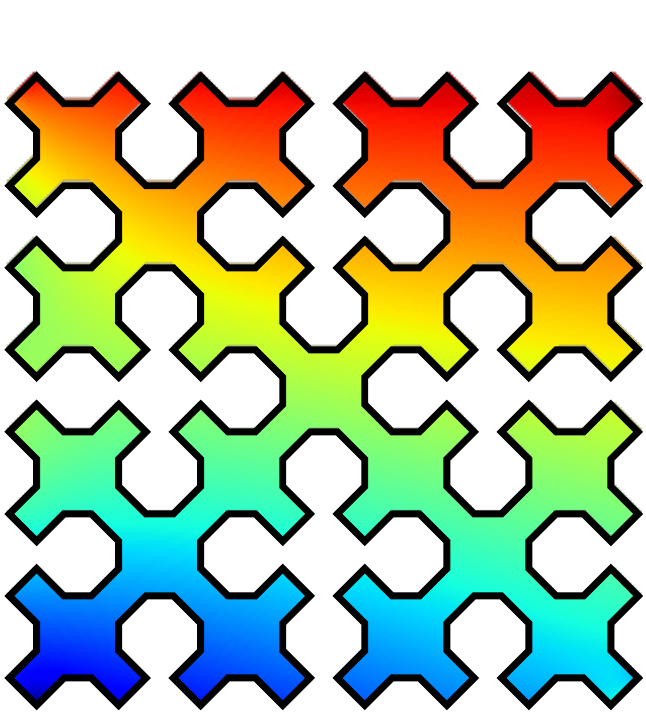}
    \hspace{1em}\includegraphics[height=\figureheight]{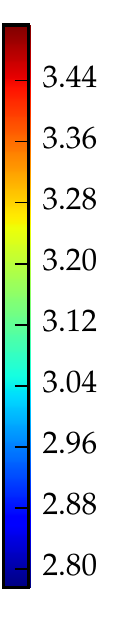}}
  \mcaption{fig:corner}{\qbkix in a domain with 256 corners}{}
\end{figure}

\section{Conclusions\label{sec:conclusion}}
In this paper we introduced a new quadrature scheme for the high-order
accurate evaluation of layer potentials associated with general
elliptic \pde  on the domain boundary and close to it.
The scheme\emdash/which builds local solution approximations using
a refined evaluation and the solution of small linear systems\emdash/
relies solely on the evaluation of the underlying kernel, so is
essentially \pde-independent.
It is highly flexible, being agnostic as to the boundary condition
type, the layer representation, and crucially, the dimension of the
problem.
We have analyzed the eror behavior of the scheme for Laplace and
Helmholtz cases.
It also fits naturally in the framework of existing fast
kernel-independent algorithms for potential evaluation such as the
\kifmm, as it uses similar local approximations.

We have tested its accuracy for three scalar- and two vector-valued
\twod Dirichlet boundary-value problems that are common in engineering
problems.
We have not attempted to optimize performance, and leave that
for future work.

There are several obvious extensions that have motivated this initial
study that we plan to pursue:
\begin{enumerate}[(1)]
\item Generalization to \threed. High-order singular quadratures for surfaces are
  complicated, application dependent, and scarce.
  Since it requires only pointwise kernel evaluations, \qbkix is by
  design very easy to implement in \threed using proxy and check
  surfaces, and would handle a wide class of {\pde}s.
  The constants will be larger, but the linear systems (anticipated to
  be of size around $10^3$) would still be very practical.
\item Generalization to other boundary conditions.  \qbx, and thus
  also \qbkix, can apply without modification, for instance, the
  normal derivative of the double-layer operator, which is
  hypersingular.
\item Integration with \kifmm. In this work, we only used \ki
  \abbrevformat{FMM} for fast evaluation of potential on the check
  points.
  However, we expect performance gains by reusing the local expansion
  of \kifmm as a \qbkix expansion.
\item Local \qbkix.
  The construction of local schemes which automatically handle general
  domains with thin features (i.e., with geodesically distant parts of
  the boundary in close proximity in space) without excessive
  refinement needed for the panel size to be on the order of feature
  size,  is important for making  the method practical.
  \cite{ce} proposed the \emph{local} version of  \qbx, in which only
  the contribution of the nearby panels to a target is evaluated using
  expansions, while contributions of more distant panels is evaluated
  using standard quadrature.  Implementing this idea is nontrivial
  however, as the end-points of the group of neighboring  panels
  produce new singularities that can affect the convergence rate.
\item Generalization of analysis to all kernels.
  As Remark~\ref{r:anal} discusses, this is a nontrivial missing piece
  in the theoretical foundations.
\end{enumerate}




\begin{acknowledgements}
  We extend our thanks to Manas Rachh, Andreas Kl{\"o}ckner, Michael
  O'Neil, and Leslie Greengard for stimulating conversations about
  various aspects of this work.
  A.R. and D.Z. acknowledge the support of the US National Science
  Foundation (NSF) through grant DMS-1320621; A.B. acknowledges the
  support of the NSF through grant DMS-1216656.
\end{acknowledgements}

\begin{appendices}
  \crefalias{section}{apx}
  \section{List of kernels\label{apx:kernels}}
Here we list the kernels for the single- and double-layer potentials
for the {\pde}s considered text.
In each case $\vx$ and $\vy$ are points in $\R^2$ and $\vr \defeq
\vx-\vy$.
The single-layer kernel is the fundamental solution.
In \dl kernels, $\vn$ is the unit vector denoting the dipole
direction, which in the context of boundary integral formulation is
the outward pointing normal to the surface.

\begin{itemize}[\quad$\bullet$]
\item Laplace:
  \begin{alignln}
    \Delta \u                   & = 0,                                              \\
    S(\vx,\vy)                  & = -\frac{1}{2\pi}\log |\vr|,                      \\
    D(\vx,\vy)                  & = \frac{1}{2\pi}\frac{\inner{\vr}{\vn}}{|\vr|^2}, \\
    \lim_{\vy\to\vx} D(\vx,\vy) & = -\frac{\kappa}{4\pi}, \qquad \vx,\vy\in\Gamma, \quad\text{(where $\kappa$ is the signed curvature)}.
  \end{alignln}

\item Yukawa:
  \begin{alignln}
    \Delta \u - \lambda^2 \u & = 0,                                \\
    S(\vx,\vy)               & = \frac{1}{2\pi}K_0(\lambda |\vr|), \\
    D(\vx,\vy)               & = \frac{\lambda}{2\pi}\frac{\inner{\vr}{\vn}}{|\vr|}K_1(\lambda|\vr|),
  \end{alignln}
  where $K_0, K_1$ are modified Bessel functions of the second kind of order zero and one, respectively.

\item Helmholtz:
  \begin{alignln}
    \Delta \u + \omega^2 \u & = 0,                                \\
    S(\vx,\vy)              & = \frac{\ii}{4}H_0^1(\omega |\vr|), \\
    D(\vx,\vy)              & = \frac{\ii\omega}{4}\frac{\inner{\vr}{\vn}}{|\vr|} H_1^1(\omega|\vr|),
  \end{alignln}
  where $H^1_0, H^1_1$ are respectively modified Hankel functions of the first kind of order zero and one.
\item Stokes:
  \begin{alignln}
    -\Delta \vu + \nabla \scalar{p} & = 0, \qquad \Div \vu = 0,                                                 \\
    S(\vx,\vy)                      & = \frac{1}{4\pi}\left(-\log|\vr| + \frac{\vr \otimes \vr}{|\vr|^2}\right), \\
    D(\vx,\vy)                      & = \frac{\inner{\vr}{\vn}}{\pi}\frac{\vr \otimes \vr}{|\vr|^4},            \\
    \lim_{\vy\to\vx} D(\vx,\vy)     & = -\frac{\kappa}{2\pi} \vector{t}\otimes\vector{t},                       \\
    P(\vx,\vy)                      & = -\frac{1}{\pi|\vr|^2} \left( 1 - 2\frac{\vr \otimes \vr}{|\vr|^2} \right) \vn.
  \end{alignln}

\item Navier: Linear elasticity for isotropic material with shear modulus $\mu$ and Poisson ratio $\nu$,
  \begin{alignln}
    \mu\Delta \vu + \frac{\mu}{1-2\nu} \Grad\Div \vu & = 0,                                                                                       \\
    S(\vx,\vy)                                  & = -\frac{3-4\nu}{8\pi(1-\nu)}\log|\vr| + \frac{1}{8\pi(1-\nu)} \frac{\vr \otimes \vr}{|\vr|^2}, \\
    D(\vx,\vy)                                  & = \frac{1-2\nu}{4\pi(1-\nu)}\left(\frac{\inner{\vr}{\vn} + \vn \otimes \vr - \vr \otimes \vn}{|\vr|^2} + \frac{2}{1-2\nu} \frac{\inner{\vr}{\vn}\, \vr \otimes \vr}{|\vr|^4}\right).
  \end{alignln}
\end{itemize}

\end{appendices}

\bibliographystyle{spmpsci} 
\bibliography{refs}

\begin{thebibliography}{10}
\providecommand{\url}[1]{{#1}}
\providecommand{\urlprefix}{URL }
\expandafter\ifx\csname urlstyle\endcsname\relax
  \providecommand{\doi}[1]{DOI~\discretionary{}{}{}#1}\else
  \providecommand{\doi}{DOI~\discretionary{}{}{}\begingroup
  \urlstyle{rm}\Url}\fi

\bibitem{alpert}
Alpert, B.K.: Hybrid {G}auss-trapezoidal quadrature rules.
\newblock SIAM J. Sci. Comput. \textbf{20}, 1551--1584 (1999)

\bibitem{atkinson}
Atkinson, K.: The numerical solution of integral equations of the second kind.
\newblock Cambridge University Press (1997)

\bibitem{ce}
Barnett, A.H.: Evaluation of layer potentials close to the boundary for
  {L}aplace and {H}elmholtz problems on analytic planar domains.
\newblock SIAM J. Sci. Comput. \textbf{36}(2), A427--A451 (2014)

\bibitem{mfs}
Barnett, A.H., Betcke, T.: Stability and convergence of the {M}ethod of
  {F}undamental {S}olutions for {H}elmholtz problems on analytic domains.
\newblock J. Comput. Phys. \textbf{227}(14), 7003--7026 (2008)

\bibitem{lsc2d}
Barnett, A.H., Wu, B., Veerapaneni, S.: Spectrally-accurate quadratures for
  evaluation of layer potentials close to the boundary for the {2D} {S}tokes
  and {L}aplace equations.
\newblock SIAM J. Sci. Comput.  (2014)

\bibitem{beale}
Beale, J., Lai, M.C.: A method for computing nearly singular integrals.
\newblock SIAM J. Numer. Anal. \textbf{38}, 1902--1925 (2001)

\bibitem{beale2015}
Beale, J.T., Ying, W., Wilson, J.R.: {A Simple Method for Computing Singular or
  Nearly Singular Integrals on Closed Surfaces}.
\newblock Commun. Comput. Phys. pp. 1--21 (2015)

\bibitem{Bo85}
Bogomolny, A.: Fundamental solutions method for elliptic boundary value
  problems.
\newblock SIAM J. Numer. Anal. \textbf{22}(4), 644--669 (1985)

\bibitem{bremer2012}
Bremer, J.: On the nystr{\"o}m discretization of integral equations on planar
  curves with corners.
\newblock Applied and Computational Harmonic Analysis \textbf{32}(1), 45--64
  (2012)

\bibitem{bremer3d}
Bremer, J., Gimbutas, Z.: A {Nystr\"om} method for weakly singular integral
  operators on surfaces.
\newblock J. Comput. Phys. \textbf{231}, 4885--4903 (2012)

\bibitem{bremer10}
Bremer, J., Rokhlin, V.: Efficient discretization of {L}aplace boundary
  integral equations on polygonal domains.
\newblock J. Comput. Phys. \textbf{229}, 2507--2525 (2010)

\bibitem{bremer2d}
Bremer, J., Rokhlin, V., Sammis, I.: Universal quadratures for boundary
  integral equations on two-dimensional domains with corners.
\newblock J. Comput. Phys. \textbf{229}(22), 8259--8280 (2010)

\bibitem{brunoFMM}
Bruno, O.P., Kunyansky, L.A.: A fast, high-order algorithm for the solution of
  surface scattering problems: basic implementation, tests, and applications.
\newblock J. Comput. Phys. \textbf{169}, 80--110 (2001)

\bibitem{coltonkress}
Colton, D., Kress, R.: Inverse acoustic and electromagnetic scattering theory,
  \emph{Applied Mathematical Sciences}, vol.~93, second edn.
\newblock Springer-Verlag, Berlin (1998)

\bibitem{corona2015}
Corona, E., Rahimian, A., Zorin, D.: A tensor-train accelerated solver for
  integral equations in complex geometries (2015)

\bibitem{davisrabin}
Davis, P.J., Rabinowitz, P.: Methods of Numerical Integration.
\newblock Academic Press, San Diego (1984)

\bibitem{duffy1982}
Duffy, M.G.: Quadrature over a pyramid or cube of integrands with a singularity
  at a vertex.
\newblock SIAM journal on Numerical Analysis \textbf{19}(6), 1260--1262 (1982)

\bibitem{QBX2}
Epstein, C.L., Greengard, L., Kl\"ockner, A.: On the convergence of local
  expansions of layer potentials.
\newblock SIAM J. Numer. Anal. \textbf{51}, 2660--2679 (2013)

\bibitem{farina2001}
Farina, L.: Evaluation of single layer potentials over curved surfaces.
\newblock SIAM Journal on Scientific Computing \textbf{23}(1), 81--91 (2001)

\bibitem{Ganesh2004}
Ganesh, M., Graham, I.: {A high-order algorithm for obstacle scattering in
  three dimensions}.
\newblock Journal of Computational Physics \textbf{198}(1), 211--242 (2004)

\bibitem{graglia2008}
Graglia, R.D., Lombardi, G.: Machine precision evaluation of singular and
  nearly singular potential integrals by use of gauss quadrature formulas for
  rational functions.
\newblock Antennas and Propagation, IEEE Transactions on \textbf{56}(4),
  981--998 (2008)

\bibitem{Graham2002}
Graham, I., Sloan, I.: {Fully discrete spectral boundary integral methods for
  Helmholtz problems on smooth closed surfaces in {$\mathbb R^3$}}.
\newblock Numerische Mathematik \textbf{92}(2), 289--323 (2002)

\bibitem{hackbusch1994}
Hackbusch, W., Sauter, S.A.: On numerical cubatures of nearly singular surface
  integrals arising in bem collocation.
\newblock Computing \textbf{52}(2), 139--159 (1994)

\bibitem{hao}
Hao, S., Barnett, A.H., Martinsson, P.G., Young, P.: High-order accurate
  {N}ystr{\"o}m discretization of integral equations with weakly singular
  kernels on smooth curves in the plane.
\newblock Adv.\ Comput.\ Math. \textbf{40}(1), 245--272 (2014)

\bibitem{helsing}
Helsing, J.: Integral equation methods for elliptic problems with boundary
  conditions of mixed type.
\newblock J. Comput. Phys. \textbf{228}, 8892--8907 (2009)

\bibitem{helsingtut}
Helsing, J.: Solving integral equations on piecewise smooth boundaries using
  the {RCIP} method: a tutorial (2012).
\newblock Preprint, 34 pages, {\tt arXiv:1207.6737v3}

\bibitem{helsing_close}
Helsing, J., Ojala, R.: On the evaluation of layer potentials close to their
  sources.
\newblock J. Comput. Phys. \textbf{227}, 2899--2921 (2008)

\bibitem{HW}
Hsiao, G., Wendland, W.L.: Boundary Integral Equations.
\newblock Applied Mathematical Sciences, Vol. 164. Springer (2008)

\bibitem{jarvenpaa2003}
J{\"a}rvenp{\"a}{\"a}, S., Taskinen, M., Yl{\"a}-Oijala, P.: Singularity
  extraction technique for integral equation methods with higher order basis
  functions on plane triangles and tetrahedra.
\newblock International journal for numerical methods in engineering
  \textbf{58}(8), 1149--1165 (2003)

\bibitem{johnson1989}
Johnson, C.G., Scott, L.R.: An analysis of quadrature errors in second-kind
  boundary integral methods.
\newblock SIAM Journal on Numerical Analysis \textbf{26}(6), 1356--1382 (1989)

\bibitem{kapur}
Kapur, S., Rokhlin, V.: High-order corrected trapezoidal quadrature rules for
  singular functions.
\newblock SIAM J. Numer. Anal. \textbf{34}, 1331--1356 (1997)

\bibitem{Ka89}
Katsurada, M.: A mathematical study of the charge simulation method. {II}.
\newblock J. Fac. Sci. Univ. Tokyo Sect. IA Math. \textbf{36}(1), 135--162
  (1989)

\bibitem{khayat2005}
Khayat, M.A., Wilton, D.R.: Numerical evaluation of singular and near-singular
  potential integrals.
\newblock Antennas and Propagation, IEEE Transactions on \textbf{53}(10),
  3180--3190 (2005)

\bibitem{QBX}
Kl\"ockner, A., Barnett, A.H., Greengard, L., O'Neil, M.: Quadrature by
  expansion: a new method for the evaluation of layer potentials.
\newblock J. Comput. Phys. \textbf{252}(1), 332--349 (2013)

\bibitem{kolm2001}
Kolm, P., Rokhlin, V.: Numerical quadratures for singular and hypersingular
  integrals.
\newblock Computers \& Mathematics with Applications \textbf{41}(3), 327--352
  (2001)

\bibitem{Kress91}
Kress, R.: Boundary integral equations in time-harmonic acoustic scattering.
\newblock Mathl. Comput. Modelling \textbf{15}, 229--243 (1991)

\bibitem{kress95}
Kress, R.: On the numerical solution of a hypersingular integral equation in
  scattering theory.
\newblock J. Comput.\ Appl.\ Math. \textbf{61}, 345--360 (1995)

\bibitem{LIE}
Kress, R.: Linear Integral Equations, \emph{Appl.\ Math.\ Sci.}, vol.~82,
  second edn.
\newblock Springer (1999)

\bibitem{lowengrub1993}
Lowengrub, J., Shelley, M., Merriman, B.: High-order and efficient methods for
  the vorticity formulation of the euler equations.
\newblock SIAM Journal on Scientific Computing \textbf{14}(5), 1107--1142
  (1993)

\bibitem{nachtigal1992a}
Nachtigal, N.M., Reddy, S.C., Trefethen, L.N.: {How Fast are Nonsymmetric
  Matrix Iterations?}
\newblock SIAM Journal on Matrix Analysis and Applications \textbf{13}(3),
  778--795 (1992)

\bibitem{ojalastokes}
Ojala, R., Tornberg, A.K.: An accurate integral equation method for simulating
  multi-phase {S}tokes flow.
\newblock J. Comput. Phys. \textbf{298}, 145--160 (2015)

\bibitem{pozrikidis}
Pozrikidis, C.: Boundary Integral and Singularity Methods for Linearized
  Viscous Flow.
\newblock Cambridge Tests in Applied Mathematics. Cambridge University Press
  (1992)

\bibitem{quaife}
Quaife, B., Biros, G.: High-volume fraction simulations of two-dimensional
  vesicle suspensions.
\newblock J. Comput. Phys. \textbf{274}, 245--267 (2014)

\bibitem{Rachh2016}
Rachh, M., Kl{\"o}ckner, A., O'Neil, M.: Fast algorithms for quadrature by
  expansion {I}: {G}lobally valid expansions.
\newblock arXiv preprint arXiv:1602.05301  (2016)

\bibitem{rahimian2010b}
Rahimian, A., Lashuk, I., Veerapaneni, S.K., Chandramowlishwaran, A., Malhotra,
  D., Moon, L., Sampath, R., Shringarpure, A., Vetter, J., Vuduc, R., Zorin,
  D., Biros, G.: {Petascale Direct Numerical Simulation of Blood Flow on 200K
  Cores and Heterogeneous Architectures}.
\newblock In: 2010 ACM/IEEE International Conference for High Performance
  Computing, Networking, Storage and Analysis, November, pp. 1--11. IEEE (2010)

\bibitem{schwab1992}
Schwab, C., Wendland, W.L.: On numerical cubatures of singular surface
  integrals in boundary element methods.
\newblock Numerische Mathematik \textbf{62}(1), 343--369 (1992)

\bibitem{sidi1988}
Sidi, A., Israeli, M.: Quadrature methods for periodic singular and weakly
  singular fredholm integral equations.
\newblock Journal of Scientific Computing \textbf{3}(2), 201--231 (1988)

\bibitem{strain1995}
Strain, J.: Locally corrected multidimensional quadrature rules for singular
  functions.
\newblock SIAM Journal on Scientific Computing \textbf{16}(4), 992--1017 (1995)

\bibitem{tlupova2013nearly}
Tlupova, S., Beale, J.T.: Nearly singular integrals in 3d {S}tokes flow.
\newblock Commun. Comput. Phys. \textbf{14}(5), 1207--1227 (2013)

\bibitem{Tornberg2004}
Tornberg, A.K., Shelley, M.J.: {Simulating the dynamics and interactions of
  flexible fibers in Stokes flows}.
\newblock Journal of Computational Physics \textbf{196}(1), 8--40 (2004)

\bibitem{nla}
Trefethen, L.N., Bau~III, D.: Numerical Linear Algebra.
\newblock SIAM (1997)

\bibitem{Veerapaneni2011}
Veerapaneni, S.K., Rahimian, A., Biros, G., Zorin, D.: {A fast algorithm for
  simulating vesicle flows in three dimensions}.
\newblock Journal of Computational Physics \textbf{230}(14), 5610--5634 (2011)

\bibitem{yarvin1998}
Yarvin, N., Rokhlin, V.: Generalized gaussian quadratures and singular value
  decompositions of integral operators.
\newblock SIAM Journal on Scientific Computing \textbf{20}(2), 699--718 (1998)

\bibitem{ying06}
Ying, L., Biros, G., Zorin, D.: A high-order {3D} boundary integral equation
  solver for elliptic {PDE}s in smooth domains.
\newblock J. Comput. Phys. \textbf{216}, 247--275 (2006)

\bibitem{yingbeale}
Ying, W., Beale, J.T.: A fast accurate boundary integral method for potentials
  on closely packed cells.
\newblock Commun. Comput. Phys. \textbf{14}, 1073--1093 (2013)

\end{thebibliography}

\end{document}